\numberwithin{equation}{section}
\theoremstyle{plain}
\newtheorem{theorem}{Theorem}[section]
\newtheorem{proposition}[theorem]{Proposition}
\newtheorem{lemma}[theorem]{Lemma}
\theoremstyle{remark}
\newtheorem*{remark}{Remark}
\theoremstyle{definition}
\newtheorem{definition}[theorem]{Definition}
\newcommand{\SU}{\mathrm{SU(2)}}
\newcommand{\SO}{\mathrm{SO(3)}}
\newcommand{\He}{\mathrm{H_3}}
\newcommand{\HQ}{\mathrm{H_3^*}}
\newcommand{\bR}{\mathbb{R}}
\newcommand{\bC}{\mathbb{C}}
\newcommand{\bZ}{\mathbb{Z}}
\newcommand{\AffR}{\mathrm{Aff(\bR)_0}\times\bR}
\newcommand{\AffS}{\mathrm{Aff(\bR)_0}\times S^1}
\newcommand{\Gtwo}{\mathrm{G_{3.2}}}
\newcommand{\Gthree}{\mathrm{G_{3.3}}}
\newcommand{\Gfour}{\mathrm{G}_\mathrm{3.4}^a}
\newcommand{\Gfive}{\mathrm{G}_\mathrm{3.5}^a}
\newcommand{\SEU}{\widetilde{\mathrm{SE}}\mathrm{(2)}}
\newcommand{\SEn}{\mathrm{SE}_n\mathrm{(2)}}
\newcommand{\SE}{\mathrm{SE(2)}}
\newcommand{\AU}{\mathrm{\tilde{A}}}
\newcommand{\An}{\mathrm{A}_n}
\newcommand{\SL}{\mathrm{SL(2,\bR)}}
\newcommand{\PSL}{\mathrm{PSL(2,\bR)}}
\newcommand{\Ad}{\mathrm{Ad}}
\newcommand{\Ml}{\left(\begin{smallmatrix}}
\newcommand{\Mr}{\end{smallmatrix}\right)}
\newcommand{\indsp}{\hspace{2mm}}
\newcommand{\botrule}{\bottomrule}
\title[The chains of CR structures on the 3-dimensional Lie groups]{The chains of left-invariant CR structures on the 3-dimensional Lie groups}
\author{Daiki Maeda}
\address{Graduate School of Mathematical Sciences, The University of Tokyo, 3-8-1 Komaba, Meguro-ku, Tokyo 153-8914 JAPAN}
\email{maeda.daiki.math@gmail.com}
\subjclass[2020]{32V05, 70G65, 70G45}  
\keywords{Chains, CR geometry, Fefferman metric, Lie-Poisson equation, Left-invariant CR structures}
\begin{document}
\begin{abstract}
Let $G$ be a 3-dimensional connected Lie group  with a left-invariant non\-degenerate CR structure.
We show that all the chains on $G$ are closed if and only if $G$ is CR equivalent to one of the 
left-invariant spherical CR structures on $\SU$,  $\SO$ and  $\HQ=$ $\He/\mathbb{Z}$, a quotient group of the Heisenberg group.
\end{abstract}

\maketitle

\section{Introduction}\label{sec1}

The chains on a nondegenerate CR manifold $M$ are a family of curves that are invariantly constructed from the CR structure of $M$. Chains were introduced by \'{E}. Cartan~\cite{car} for hypersurfaces in $\mathbb{C}^{2}$ and generalized to higher dimensions by Chern--Moser~\cite{che} and Fefferman~\cite{feff}; see \cite{bur} for the relation between these.
 Fefferman's definition goes through a conformal class $[g]$ of pseudo-Riemanian metrics defined on a circle bundle $C(M)$ over $M$, which is now called Fefferman space.  Chains are then defined to be the projections onto $M$ of the null geodesics of the metric $g$ in the class $[g]$.

\indent
Chains have been studied for various CR manifolds. For example, Reinhardt hypersurfaces~\cite{jac}, boundaries of Grauert tubes~\cite{ste}, and left-invariant CR structures on $\SU$~\cite{cas}. Castro and Montgomery~\cite{cas} showed that the moduli space of all left-invariant CR structures on $\SU$ are parametrized by a real parameter $a\ge1$ and wrote down the dynamical system for the chains. For $a = 1$, the CR structure can be realized as the standard one on $S^3\subset\mathbb{C}^{2}$. In this case,  any chain is given by the intersection of $S^3$ with a complex line; hence all the chains are closed.  The situation changes for $a>1$: except for finite values of $a>1$, they showed that closed chains and quasiperiodic ones both exist.
In this paper, we complete their result by showing that the chains of both types exist for all $a>1$.

Moreover, we study the chains of all left-invariant CR structures on 3-dimensional Lie groups. \'{E}. Cartan classified 3-dimensional homogeneous CR manifolds in~\cite{car}. 
Bor and Jacobowitz~\cite{bor}  revisited Catran's paper and gave a detailed classification results in modern language for four 3-dimensional Lie groups.  Here we follow their method and give a complete  classification of left-invariant CR structures on 3-dimensional Lie groups.
Then we study the existence of chains that are not closed.

To state our result, let us recall 3 examples of CR structures on Lie groups for which all the chains are closed.  The first one is the sphere $SU(2)\cong S^{3}$, as we described above.  The second one is its universal cover $\SO$ as CR manifold.  The third one is a quotient of the Heisenberg group $\He$.
As $S^{3}$ is the one-point compactification of $\He$ as a CR manifold,  $\He$ has chains that go to infinity.  However, by taking a quotient $\HQ=\He/\mathbb{Z}$, 
see  Section \ref{Classification-CRstructures}, we obtain a noncompact CR manifold whose chains are all closed.  Note that in each case the CR structure is locally equivalent to the one on the sphere and are called spherical CR structure.
Our main result claims that these are complete list of left invariant CR manifolds whose chains are all closed.

\begin{theorem}\label{mainthm}
Let $G$ be a 3-dimensional connected Lie group with a left-invariant nondegenerate CR structure. Then, all the chains on $G$ are closed if and only if $G$ is equivalent to $\SU$, $\SO$, or $\HQ$
with spherical CR structures.
\end{theorem}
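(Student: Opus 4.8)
The plan is to organize the proof around a complete classification of the left-invariant nondegenerate CR structures on 3-dimensional Lie groups, and then to analyze the chain dynamics case by case. First I would produce the classification following the Bor--Jacobowitz approach referenced in the introduction: enumerate the 3-dimensional Lie algebras (the Bianchi classification, giving $\SU$-type $\mathfrak{su}(2)$, the solvable families $\Gfour$, $\Gfive$, the Heisenberg algebra, $\SE$-type, $\SL$-type, and so on), and for each one determine, up to automorphism and CR-equivalence, which left-invariant complex line fields $H^{1,0}$ in the complexified Lie algebra give a nondegenerate Levi form. The moduli should come out as a small number of one-parameter families, including the $\SU$ family parametrized by $a\ge 1$ already studied by Castro--Montgomery. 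The output of this step is a finite list of normal forms with explicit structure constants and an explicit description of the CR structure on each.

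\smallskip
Second, for each normal form I would write down the chain equations. Since the CR structure is left-invariant, the Fefferman metric descends to a left-invariant Lorentzian conformal structure on the circle bundle $C(G)$, and the null geodesic flow becomes a left-invariant flow. The key reduction is that a left-invariant geodesic flow on a Lie group, after taking the momentum map to the dual of the Lie algebra, is governed by the Lie--Poisson equation on $\mathfrak{g}^{*}$ (this is the tool flagged by the \emph{Lie-Poisson equation} keyword). Thus the chain dynamics reduces to an ODE on a low-dimensional space, and closedness of a chain becomes periodicity (up to the left action) of the corresponding Lie--Poisson trajectory. I would set up this reduction uniformly, then specialize the conserved quantities (Casimirs, energy) for each algebra.

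\smallskip
Third comes the dichotomy analysis, which I expect to be the main obstacle. The spherical cases $\SU$, $\SO$, $\HQ$ must be shown to have all chains closed: here I would use that these CR structures are locally equivalent to the standard sphere, so their chains are the images of the sphere's chains (intersections with complex lines), and closedness follows from the explicit compact/quotient geometry. The harder direction is to show that \emph{every other} structure admits a non-closed chain. For the $\SU$ family with $a>1$ this completes the Castro--Montgomery analysis, and the delicate point is proving existence of quasiperiodic (non-closed) chains for the finitely many exceptional values of $a$ they left open: I would analyze the reduced two-frequency system and show that the frequency ratio fails to be rational for a full-measure or topologically generic set of initial conditions, for \emph{all} $a>1$, typically by examining the rotation number of the reduced periodic orbit and ruling out identical resonance. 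For the remaining (non-compact, non-$\HQ$) Lie groups, the reduced Lie--Poisson trajectories are generically non-periodic or escape to infinity, and I would exhibit an explicit chain that is unbounded or whose closure is a nontrivial submanifold, contradicting closedness.

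\smallskip
The crux of the whole argument is therefore the genericity/irrationality step: translating the algebraic data of each normal form into a statement about the rotation number or frequency ratio of the reduced chain flow, and proving that outside the three spherical models this ratio is non-constant in the initial conditions (hence irrational somewhere), so a non-closed chain always exists. I would treat the parameter-dependence carefully, since the exceptional values of $a$ are exactly where naive resonance arguments break down, and handle them by a direct computation of the monodromy of the reduced system rather than by a perturbative estimate.
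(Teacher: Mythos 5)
Your proposal takes essentially the same route as the paper: a Bor--Jacobowitz-style classification of the left-invariant structures, Lie--Poisson reduction of the Fefferman null-geodesic flow, explicit non-periodic (homoclinic/heteroclinic) or stationary trajectories yielding non-closed chains for every group other than $\SU$, and for $\SU$ with $a>1$ exactly your rotation-number idea, which the paper implements as the Berry phase $\Delta\theta(K,a)$ and shows to be non-constant in the initial condition $K$ by computing its two limits, as $K\to\infty$ and as $K$ tends to its minimal value (these differ for $1<a\le\sqrt{3}$, while $a>\sqrt{3}$ is quoted from Castro--Montgomery). The approaches match in every essential step, including the closedness verification for $\SU$, $\SO$, and $\HQ$ via the explicit quotient geometry.
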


Our analysis of the chains based on \cite{cas}.
For a left-invariant CR structure on a Lie group $G$, the Fefferman space is $G\times S^1$ and the metric $g$ can be chosen to be left-invariant on $G\times S^1$. Thus our study of chains  can be 
reduced to the study Hamiltonian systems for the left-invariant metrics on Lie groups.
Such a Hamiltonian system has a reduced system defined on the dual space of the Lie algebra, which is called the Lie--Poisson equation. 
If  the Lie--Poisson equation has solution that are not closed, then there is a chain that are not closed.
If  the Lie--Poisson equation has a stationary solution, then the OED for the chain has explicit solutions that are not closed. 
Except for the case $\SU$, Theorem \ref{mainthm} can be proved by using this criterion. 

For $\SU$, there are the cases where each solution $C$ of the Lie--Poisson equation is closed;  the chains are given as  curves in a $S^{1}$-bundle over $C$.  The Berry phase of a chain is defined as the difference of the angle in the fiber $S^{1}$  when one travels along the chain for the period of $C$.
The chain is closed if and only if the Berry phase is a rational multiple of $\pi$.
Hence the main part of the proof of the theorem is the evaluation of the Berry phase, which are given in 
Section  \ref{sec5}.

\indent Our paper is organized as follows. In Section \ref{sec2}, we recall Fefferman's definition of chains and Hamiltonian systems for left-invariant metrics. In Section \ref{sec3}, we classify left-invariant nondegenerate CR structures on 3-dimensional connected Lie groups and compute  Fefferman metrics. In Section \ref{sec4}, we prove the main theorem except for the case of $\SU$ by solving the Lie--Poisson equation. In Section \ref{sec5}, we study the case $\SU$ and completes the proof of the theorem.

\section{Preliminaries}\label{sec2}
\subsection{CR manifolds}
Let $M$ be a $C^\infty$-manifold of dimension $3$. A \textit{CR structure} on $M$ is a complex line bundle $T^{1,0}\subset\mathbb{C} TM$ such that $ T^{1,0}\cap\overline{T^{1,0}}=\{0\}$.
We assume that there is  a real 1-form $\theta$ such that $\ker\theta=T^{1,0}\oplus\overline{T^{1,0}}\subset\mathbb{C} TM$ and $\theta\wedge d\theta\ne0$.  In this case, $T^{1,0}M$, is said to be \textit{ nondegenerate} and $\theta$ is called a \textit{contact from}.

The \textit{Reeb vector field} $T$ of $\theta$ is a vector field characterized by
\begin{align}\label{Reeb}
\theta(T) = 1,\quad T\,\lrcorner\, d\theta=0.
\end{align}
For a  local frame $T_{1}$ for $T^{1,0}$, we may define  a local frame for $\bC TM$ by $\{T,T_{1},T_{\bar 1}:=\overline{T_{1}}\}$. 
Such a frame is called  an \textit{admissible frame} and its dual coframe $\{\theta,\theta^{1},\theta^{\bar1}\}$ is called an \textit{admissible coframe}. In this coframe, we have
\begin{equation*}
d\theta=ih_{1\bar1}\theta^1\wedge\theta^{\bar1} 
\end{equation*}
for a real function $h_{1\bar1}$, which can be seen as a hermitian metric on $T^{1,0}$.

The \textit{Tanaka--Webster connection} $\nabla$ (cf.~\cite{tan,web}) of  $(T^{1,0},h_{1\bar1})$ 
is a metric connection satisfying the structure equation
\[
d\theta^1 = \theta^1\wedge\omega_1^{\indsp 1} +
A_{\bar1}{}^{1}\theta^{\bar1}\wedge\theta,
\]
where the connection from is defined by
$\nabla T_{1}=\omega_{1}{}^{1}T_{1}$.
The tensor $A_{\bar1}{}^{1}$ is called the \textit{Tanaka--Webster torsion}.
The \textit{Tanaka--Webster scalar curvature} $S$ is then defined by
\[
d\omega_{1}{}^1
 \equiv -i Sd\theta
\mod{\theta}.
\]

We now recall the construction of the Fefferman metric by following Lee~\cite{lee}.
Let
\begin{equation*}
K^{*}(M):=\{\zeta\in\wedge^{2}\bC T^*M\mid \zeta\ne0, \bar{V} \,
\lrcorner\,\zeta=0, \forall V\in T^{1,0}\},
\end{equation*}
the canonical bundle with zero section removed.  The \textit{Fefferman space} is the circle bundle 
 $C(M)=K^*(M)/\mathbb{R}^{+}$,  the quotient by the  natural $\bR^+$ action $\zeta\mapsto\lambda\zeta$. 
 For a choice of an admissible coframe $\{\theta,\theta^{1},\theta^{\bar1}\}$ gives  a local section $\zeta_0=[\theta\wedge\theta^1]$ of the bundle  $\pi\colon C(M)\to M$, which also gives a fiber coordinate $s\in\bR/2\pi\bZ$  by $[\zeta]=e^{ i s}\zeta_0$.
 Then we may define a 1-form on $C(M)$ by
\begin{align}
\label{sigma}
\sigma := \frac{1}{3}\Big(ds-\mathrm{Im}\,\pi^*\omega_1^{\indsp 1} - \frac{S}{4}\pi^*\theta\Big).
\end{align}
It turns out that $\sigma$ is independent of the choice of an admissible coframe and 
so $\sigma$ is defined globally on $C(M)$. The \textit{Fefferman metric} $g_\theta$ corresponding to $\theta$ is the Lorentz  metric on $C(M)$ defined by
\begin{align}
\label{Feffermanmetric}
g_\theta := 2\pi^*(h_{1\bar{1}}\theta^1\odot\theta^{\bar{1}} )+ 4\pi^*\theta\odot\sigma,
\end{align}
where $\theta\odot\sigma := \frac{1}{2}(\theta\otimes\sigma + \sigma\otimes\theta)$. 
It was shown in \cite{lee} that
\[
g_{\hat\theta}=e^\Upsilon g_\theta\quad
\text{if }\hat\theta=e^\Upsilon\theta\text{ with }\Upsilon\in C^\infty(M).
\]
Thus the conformal class $[g_\theta]$ is determined by the CR structure of $M$.
\begin{definition}
A \textit{chain} is a curve $\gamma$ on $M$ which is the projection onto $M$ of a null geodesic $\tilde\gamma$ of $(C(M),[g_\theta])$ and $\theta(\dot\gamma)\neq0$.
\end{definition}

\subsection{Geodesics on Lie groups with left-invariant metrics}\label{sec2.2}
Let $G$ be a Lie group of dimension $n$ and $g$ a left-invariant pseudo-Riemannian metric on $G$. 
Let  $\{e_i\}$ be a basis  the Lie algebra $\mathfrak{g}$ of $G$ and $\{\omega^i\}$ be its dual basis. Then we may trivialize $T^{*}G$ by 
\begin{equation*}
G\times\bR^n\rightarrow T^*G,\, (p,M_i) \mapsto M_i\omega^i(p).
\end{equation*}
We can see $M_{i}$  as functions on $T^{*}G$; via this trivialization, we can also see $e_{i}$ and $\omega^{i}$ as vectors and one forms on $T^{*}G$.
With these conventions, the tautological 1-form $\Theta$ be $T^*G$ can be written as
\begin{equation*}
\Theta = M_i  \omega^i.
\end{equation*}
Thus the canonical 2-from is 
\begin{equation*}
\omega := -d\Theta = -(dM_i\wedge \omega^i+M_id\omega^i).
\end{equation*}
Hence the isomorphism $\omega^{\sharp}\colon T^{*}(TG)\to T(TG)$ induced by $\omega$ satisfies
\begin{align}\label{sharp}
\omega^\#(\omega^i)=-\frac{\partial}{\partial M_i},\quad
\omega^\#\left(dM_i - M_j(e_i\,\lrcorner\,d\omega^j)\right)=e_i.
\end{align}
We also have
\begin{align}\label{Poisson bracket}
M_j(e_i\,\lrcorner\, d\omega^j)=\{M_i,M_k\}\omega^k,
\end{align}
where $\{\cdot,\cdot\}$ is the Poisson bracket on $(T^{*}G,\omega)$.
As $g$ is left-invariant, $g_{ij}:=g(e_i,e_j)$ is a constant matrix. Using the invinverseerise matrix $\{g^{ij}\}$ of $\{g_{ij}\}$, we define the Hamiltonian of $g$ by
\begin{align}\label{Hamiltonian}
H(M_i\omega^i)=\frac{1}{2}g^{ij}M_iM_j=\frac{1}{2}M_{i}M^{i},
\end{align}
where we have set $M^{i}=g^{ij}M_j$.
It follows that $dH = M^{j}dM_j$.
Thus, by using (\ref{sharp}) and (\ref{Poisson bracket}), we may write the Hamilton vector field as
\[
X_H := \omega^\# dH = -M^{j}\{M_j,M_k\}\frac{\partial}{\partial M_k} + M^{j}e_j.
\]
Therefore, an integral curve  $\gamma(t)=M_i(t)\omega^i(p(t))$ of $X_H$ satisfies 
\begin{equation}\label{L-Peq}
\left\{
\begin{array}{lll}
\dot{M_i}&=& -M^{j}\{M_j,M_i\},\\
\dot{p}&=& M^{j}e_j. \\
\end{array}
\right.
\end{equation}
The first system of the equations is called the \textit{Lie--Poisson equation}.

Let $R$ be the right transformation of $G$. Then the map
\begin{equation*}
J:T^*G\to\mathfrak{g}^*, \quad \alpha_p \mapsto R_p^*\alpha_p,
\end{equation*}
is called a momentum map.  It is shown that any integral curve of $X_H$ lies on a level set of $J$;
see e.g. \cite{abr}. 

\section{The Left-invariant CR structures on the 3-dimensional connected Lie groups}\label{sec3}
\subsection{Classification of CR structures}\label{Classification-CRstructures}
We recall the following well-known fact; see e.g.~\cite{oni}. Any 3-dimensional connected real non abelian Lie group is isomorphic to one of the following pairwise nonisomorphic Lie groups:
\begin{align*}
&\AffR, \AffS, \He, \HQ,
\\
& \Gtwo, \Gthree, \Gfour (a\ge0, a\neq1), \Gfive (a>0), 
\\
& \SEU, \SEn, \SE\cong\mathrm{G}_\mathrm{3.5}^0, 
\\
&\AU, \An (n\ge3), \SL, \PSL, \SU, \SO.
\end{align*}
Here
\[
\begin{array}{ccl}
\vspace{2mm}
\AffR & = & \left\{\begin{pmatrix}
1 & 0 \\
x & e^{y} \\
\end{pmatrix} : x,y\in\bR\right\},\\
\vspace{2mm}
\He &=& \left\{\begin{pmatrix}
1 & x & z\\
0 & 1 & y \\
0 & 0 & 1 \\
\end{pmatrix} : x, y, z\in\bR\right\},\\\vspace{2mm}
\Gtwo &=& \left\{\begin{pmatrix}
1 & 0 & 0\\
y & e^{z} & 0 \\
x & -ze^{z} & e^{z} \\
\end{pmatrix} : x, y, z\in\bR\right\},\\\vspace{2mm}
\Gthree&=&\left\{\begin{pmatrix}
1 & 0 & 0\\
y & e^{z} & 0 \\
x & 0 & e^{z} \\
\end{pmatrix} : x, y, z\in\bR\right\},\\\vspace{2mm}
\Gfour&=&\left\{\begin{pmatrix}
1 & 0 & 0\\
x & e^{a z}\cosh z &  -e^{a z}\sinh z\\
y & -e^{a z}\sinh z & e^{a z}\cosh z \\
\end{pmatrix} : x, y, z\in\bR\right\},\\
\Gfive&=&\left\{\begin{pmatrix}
1 & 0 & 0\\
x & e^{a z}\cos z &  -e^{a z}\sin z\\
y & e^{a z}\sin z & e^{a z}\cos z \\
\end{pmatrix} : x, y, z\in\bR\right\}.
\end{array}
\]
The Lie group $\HQ$ is $\He / \bZ(\He(\bZ))$, where 
\[
\bZ(\He(\bZ))=\left\{
\begin{pmatrix}
1 & 0 & z\\
0 & 1 & 0 \\
0 & 0 & 1 \\
\end{pmatrix} : z\in\bZ\right\}.
\]
 The Lie group $\SEU$ is the universal covering group of $\SE$ and $\SEn$ are the $n$-fold coverings. The groups $\AU$ and $\An$are respectively  the universal and  the $n$-fold coverings  of $\PSL$.

We first recall  the normal form of the left-invariant CR structures on $\SU$,
 $\SL$ and $\He$.
  
\begin{proposition}[\cite{cas, bor}]
For $\SU$,  $\SL$ and $\He$,
the left-invariant nondegenerate CR structures are classified as in Table \ref{table:1}. 
The CR structures in Table  \ref{table:1} are pairwise non CR diffeomorphic and
the ones on $\He$,  $\SU$ corresponding to $a=1$, and on $\SL$ corresponding to $a=1,-3+2\sqrt{2}$ are spherical.
\end{proposition}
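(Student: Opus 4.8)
The plan is to reduce the classification to linear algebra on the Lie algebra $\mathfrak{g}$ of $G$ and then to separate the resulting normal forms by CR invariants. A left-invariant nondegenerate CR structure is the same data as a complex line $T^{1,0}\subset\mathbb{C}\mathfrak{g}$ with $T^{1,0}\cap\overline{T^{1,0}}=\{0\}$, equivalently a real $2$-plane $H\subset\mathfrak{g}$ (the contact distribution) together with a complex structure $J$ on $H$. Since for left-invariant forms $d\theta(X,Y)=-\theta([X,Y])$, the nondegeneracy condition $\theta\wedge d\theta\neq0$ is equivalent to $[H,H]\not\subset H$. Two such structures are CR equivalent through a Lie group automorphism precisely when the pairs $(H,J)$ are carried into one another by an element of $\mathrm{Aut}(\mathfrak{g})$, so as a first step I would classify the pairs $(H,J)$ up to the action of $\mathrm{Aut}(\mathfrak{g})$.

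First I would carry this out for each of the three algebras. For $\He$ one has $[\mathfrak{g},\mathfrak{g}]$ equal to the center $\bR e_3$, so nondegeneracy forces $H$ to meet the center trivially; the automorphism group (a copy of $GL(2,\bR)$ acting on $\mathfrak{g}/\text{center}$, together with translations into the center) then moves any admissible $(H,J)$ to a single normal form, giving the unique entry for $\He$. For $\mathfrak{su}(2)$ the automorphism group is $\SO=\mathrm{Inn}(\mathfrak{su}(2))$, which acts transitively on $2$-planes; fixing $H$, the residual $SO(2)$ acts on the compatible complex structures $J$, and the $\mathrm{Aut}$-invariant relative position of $J$ with respect to the (definite) Killing form produces the one-parameter family with $a\ge1$. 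For $\mathfrak{sl}(2,\bR)$ the Killing form is indefinite, so $2$-planes fall into several orbit types according as the normal direction is spacelike, timelike or null; running the same reduction with $\mathrm{Aut}(\mathfrak{sl}(2,\bR))\cong PGL(2,\bR)$ yields the stated range of $a$ together with its distinguished values. These reductions exhibit exactly the entries of Table \ref{table:1} and show every left-invariant structure is equivalent to one of them.

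Next I would prove the entries are pairwise non CR diffeomorphic. Across different groups this is immediate: a CR diffeomorphism is in particular a diffeomorphism, and the underlying manifolds $\SU\cong S^3$, $\He\cong\bR^3$, and $\SL\cong\bR^2\times S^1$ are pairwise non-homeomorphic. Within a single group I would compute the Tanaka--Webster invariants from the structure equations $d\theta^1=\theta^1\wedge\omega_1{}^1+A_{\bar1}{}^1\theta^{\bar1}\wedge\theta$ and $d\omega_1{}^1\equiv -iS\,d\theta\ \mathrm{mod}\ \theta$; on a homogeneous CR manifold $S$ and $A_{\bar1}{}^1$ are constants determined algebraically by the structure constants, and a suitable CR-invariant combination of them (a weighted ratio of $|A_{\bar1}{}^1|$ and $S$) depends monotonically on $a$. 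Since these quantities are already local invariants, distinct values of $a$ give structures that are not even locally CR equivalent, which is more than enough.

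Finally, for sphericity I would use the fact that in dimension three a CR structure is locally equivalent to the sphere exactly when Cartan's curvature vanishes; for a homogeneous structure this is a single scalar expression in $S$, $A_{\bar1}{}^1$ and their Tanaka--Webster covariant derivatives, again computable purely from the structure constants. Evaluating this obstruction on the normal forms and solving for the parameter should return $a=1$ on $\SU$, the two roots $a=1$ and $a=-3+2\sqrt{2}$ on $\SL$, and identically zero for the unique structure on $\He$ (which is the flat Heisenberg model). The main obstacle is this last computation together with the curvature bookkeeping in the previous step: one must set up an explicit admissible coframe adapted to each normal form, solve the structure equations for $\omega_1{}^1$, extract $S$, $A_{\bar1}{}^1$ and the covariant derivatives entering Cartan's invariant, and then verify both that the separating invariant is genuinely injective in $a$ and that the sphericity locus is exactly the claimed finite set.
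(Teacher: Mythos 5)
There is a genuine gap, and it sits exactly where the statement is most delicate. Note first that the paper does not prove this proposition at all: it is quoted from Castro--Montgomery \cite{cas} and Bor--Jacobowitz \cite{bor}, and the normalization-by-$\mathrm{Aut}(\mathfrak{g})$ reduction you set up in your first step is indeed the method used there (and the method the paper itself uses for Table \ref{table:2}). That part of your plan is sound: producing the normal forms only requires moving $(H,J)$ by Lie algebra automorphisms, and the cross-group distinctions by topology ($S^3$, $\bR^3$, $\bR^2\times S^1$) are fine.

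The gap is in your claim that pairwise non-equivalence within one group follows because ``distinct values of $a$ give structures that are not even locally CR equivalent, which is more than enough.'' This is false for the pair that matters most: on $\SL$ the structures with $a=1$ and $a=-3+2\sqrt{2}$ are \emph{both} spherical (as the proposition itself asserts), hence both have vanishing Cartan curvature and are locally CR equivalent to each other and to the standard $S^3$. No local invariant --- no combination of $S$, $A_{\bar1}{}^{1}$ and their Tanaka--Webster covariant derivatives --- can separate them, so your separating-invariant strategy collapses precisely for this pair, and the proposition's claim that all entries of Table \ref{table:1} are pairwise non CR diffeomorphic is left unproved by your argument. Distinguishing two spherical structures on the same group is necessarily a \emph{global} problem; in \cite{bor} it is handled by global data (the two structures are realized differently as covers of orbits in $\mathrm{P}(\mathfrak{sl}(2,\bC))$, equivalently they have non-conjugate holonomy/developing data as spherical CR manifolds), and some argument of this kind must replace your local one. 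For the nonspherical values of $a$ (on $\SU$ with $a>1$ and on $\SL$ away from the two spherical parameters) your scheme can be made to work, provided you also verify that the quantity you use is genuinely CR invariant (recall $S$ and $A_{\bar1}{}^{1}$ depend on the choice of $\theta$, so only suitably weighted combinations are invariant) and injective in $a$; that is a computation, not an obstruction. But as written, the proof of pairwise inequivalence is incomplete, and the missing case cannot be repaired within the local framework you propose.
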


\begin{table}[ht]
\begin{center}
\begin{minipage}{330pt}
\caption{Normalized left-invariant CR structures on $\He$, $\SL$, $\SU$}\label{table:1}
\begin{tabular}{llll}
\toprule
Group & $T^{1,0}$ & basis & Lie bracket \\
\midrule
$\He$ & $\bC(e_1 - i e_2)$ & 
\begin{tabular}{l}
$e_1 = \Ml
0 & 1 & 0\\
0 & 0 & 0 \\
0 & 0 & 0 \\
\Mr$,\\\addlinespace[2mm]
$e_2 = \Ml
0 & 0 & 0\\
0 & 0 & 1 \\
0 & 0 & 0 \\
\Mr$, \\
\addlinespace[2mm]
$e_3 = \Ml
0 & 0 & 1 \\
0 & 0 & 0 \\
0 & 0 & 0 \\
\Mr$ 
\end{tabular}&
\begin{tabular}{l}
$[e_1, e_2] = e_3$,\\
\addlinespace[2mm]
$[e_2,e_3] = 0$,\\\addlinespace[2mm]
$[e_3,e_1] = 0$
\end{tabular}\\
\midrule
$\SL$ & \begin{tabular}{l}
$\bC \left(ae_1+e_2-i\frac{1+a}{2} e_3\right),$\\
\addlinespace[2mm] $a \in(-1,0)\cup(0,1]$
\end{tabular} & 
\begin{tabular}{l}
$e_1 = \Ml
0 & 1 \\
0 & 0 \\
\Mr$,\\\addlinespace[2mm]
$e_2 = \Ml
0 & 0 \\
1 & 0 \\
\Mr,$ \\
\addlinespace[2mm]
$e_3 = \Ml
1 & 0 \\
0 & -1 \\
\Mr$
\end{tabular}&
\begin{tabular}{l}
$[e_1, e_2] = e_3$,\\\addlinespace[2mm]
$[e_2,e_3] = 2e_2$,\\\addlinespace[2mm]
$[e_3,e_1] = 2e_1$
\end{tabular}\\
\midrule
$\SU$ & $\bC \left(e_1-\frac{i}{a} e_2\right), a \ge 1$ & 
\begin{tabular}{l}
$e_1 = \frac{1}{2}\Ml
0 & i \\
i & 0 \\
\Mr$,\\\addlinespace[2mm]
$e_2 = \frac{1}{2}\Ml
0 & -1 \\
1 & 0 \\
\Mr$, \\\addlinespace[2mm]
$e_3 = \frac{1}{2}\Ml
i & 0 \\
0 & -i \\
\Mr$
\end{tabular}&
\begin{tabular}{l}
$[e_1, e_2] = e_3$,\\\addlinespace[2mm]
$[e_2,e_3] = e_1$,\\\addlinespace[2mm]
$[e_3,e_1] = e_2$
\end{tabular}\\
\botrule
\end{tabular}
\end{minipage}
\end{center}
\end{table}

For other groups, we have the following

\begin{proposition}
The left-invariant nondegenerate CR structures on each Lie group are classified as in Table \ref{table:2}. The CR structures in Table \ref{table:2} are pairwise non CR diffeomorphic and the ones on $\AffR, \AffS$ and $\mathrm{G}_\mathrm{3.4}^3$ are spherical. Any left-invariant CR structure on $\Gthree$ is degenerate.
\end{proposition}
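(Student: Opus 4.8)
The plan is to regard a left-invariant CR structure as a Lie-algebraic datum. In dimension three a left-invariant almost CR structure is exactly a complex line $T^{1,0}=\bC(e+if)\subset\bC\mathfrak{g}$ with $e,f\in\mathfrak{g}$ real-linearly independent (equivalently $T^{1,0}\cap\overline{T^{1,0}}=\{0\}$), and formal integrability holds automatically because $T^{1,0}$ has complex rank one. Let $H=\mathrm{span}_\bR\{e,f\}\subset\mathfrak{g}$ be the underlying real $2$-plane and $\theta$ a left-invariant annihilator of $H$. The Maurer--Cartan identity $d\theta(A,B)=-\theta([A,B])$ shows that $\theta\wedge d\theta\ne0$ if and only if $[H,H]\not\subset H$; thus the structure is nondegenerate exactly when $H$ is a contact plane. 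The classification therefore amounts to listing, for each $\mathfrak{g}$, the contact planes $H$ equipped with a compatible complex structure, modulo the equivalences induced by $\mathrm{Aut}(\mathfrak{g})$ and the rescaling $e+if\mapsto\lambda(e+if)$, $\lambda\in\bC^\times$, of the generator.

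First I would record the structure constants of each Lie algebra from its matrix model, compute $\mathrm{Aut}(\mathfrak{g})$, and let it act on the set of complex lines, an open subset of $\mathbb{CP}^2=\mathbb{P}(\bC\mathfrak{g})$ (the removed locus being the real points). For the remaining groups $\AffR$, $\AffS$, $\Gtwo$, $\Gfour$, $\Gfive$, $\SEU$, $\SEn$, $\SE$, $\AU$, $\An$, $\PSL$ the automorphism group acts with so few orbits that every line is carried into a short normal form; imposing $[H,H]\not\subset H$ discards the degenerate lines and leaves precisely the entries of Table \ref{table:2}. This orbit analysis is routine but is the bulk of the work.

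The assertion about $\Gthree$ falls out immediately. In $\mathfrak{g}_{3.3}$, with basis satisfying $[Z,X]=X$, $[Z,Y]=Y$, $[X,Y]=0$, the derived algebra is the abelian plane $\langle X,Y\rangle$. A $2$-plane $H$ either equals $\langle X,Y\rangle$, or meets it in a line $\bR u$ and may be written $H=\langle u,\,Z+w\rangle$ with $u,w\in\langle X,Y\rangle$; in the latter case $[u,Z+w]=-u\in H$. Hence every $2$-plane is a subalgebra, so $\theta\wedge d\theta=0$ for every left-invariant $\theta$, and $\Gthree$ admits no nondegenerate left-invariant CR structure.

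To separate the surviving structures and to single out the spherical ones I would compute pseudohermitian invariants. For each normal form, choose an admissible coframe, solve the structure equations of Section \ref{sec2} to obtain the (constant) Tanaka--Webster torsion $A_{\bar1}{}^1$ and scalar curvature $S$, and form the associated CR invariants. Entries lying in genuine parameter families, such as those over $\Gfour$ and $\Gfive$, are separated by the values of these invariants. By contrast the covers $\SEU$, $\SEn$, $\SE$ share the Lie algebra $\mathfrak{se}(2)$, and $\AU$, $\An$, $\PSL$ share $\mathfrak{sl}(2,\bR)$, so within each such family the local invariants coincide and distinctness must instead be read off from global data, namely the fundamental group and the covering multiplicity. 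Sphericity is governed by the single three-dimensional Cartan curvature $Q$, a weighted polynomial in $A_{\bar1}{}^1$, $S$ and their covariant derivatives; I would verify that $Q$ vanishes identically for $\AffR$, $\AffS$ and $\mathrm{G}_\mathrm{3.4}^3$, checking in the $\Gfour$ family that the only zero of $Q$ is at $a=3$. I expect the correct normalization and evaluation of $Q$, together with the assembly of a complete enough set of invariants to force pairwise inequivalence, to be the main obstacle.
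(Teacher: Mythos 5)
Your Lie-algebraic setup (complex lines in $\bC\mathfrak{g}$, automatic integrability in rank one, nondegeneracy $\Leftrightarrow[H,H]\not\subset H$) is exactly the framework the paper uses, and your argument for $\Gthree$ --- every $2$-plane in $\mathfrak{g}_{3.3}$ is a subalgebra --- is correct and in fact cleaner than the paper's explicit $\Ad_g$ computation. Your idea of detecting sphericity by the Cartan curvature $Q$ is also a legitimate alternative to the paper's method (the paper instead embeds each normal form as a hypersurface in $\bC^2$ and matches the image against Cartan's table).

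The genuine gap is the equivalence relation you normalize by. You work ``modulo $\mathrm{Aut}(\mathfrak{g})$,'' but the proposition classifies structures on the \emph{groups} up to CR diffeomorphism, and for a non-simply-connected group only those algebra automorphisms that lift to group automorphisms may be used. This is fatal precisely for the pair $\AffR$, $\AffS$: they share the Lie algebra $\mathfrak{aff}(\bR)\oplus\bR$, so your procedure would output the same normal forms for both, whereas Table \ref{table:2} records a single structure on $\AffR$ but a one-parameter family $\bC(e_2+ie_1+ai e_3)$, $a>0$, on $\AffS$. The algebra automorphism $e_3\mapsto\gamma e_3$ that kills the parameter does not descend to the factor $S^1=\bR/2\pi\bZ$ unless $\gamma=\pm1$; this is why the paper restricts itself to $\Ad_g$ together with explicitly liftable homomorphisms, and invokes the extra scaling of the $\bR$-factor only for $\AffR$. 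So as stated your normalization step proves a false statement.

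The same oversight breaks your distinctness argument for that family. By your own (correct) expectation, every structure on $\AffS$ is spherical, so \emph{all} local pseudohermitian and CR invariants --- torsion, scalar curvature, $Q$, and anything built from them --- agree for all $a$; and the whole family lives on one and the same group, so fundamental group and covering multiplicity give nothing either. Pairwise inequivalence within the $\AffS$ family is an irreducibly global statement about locally spherical structures, which your toolkit (local invariants plus covering data) cannot see. The paper obtains it from the embeddings onto the pairwise inequivalent homogeneous hypersurfaces $\frac{\alpha-\bar\alpha}{2i}=(\beta\bar\beta)^{1/2a}$ occurring in Cartan's classification; equivalently one could compare developing maps and holonomy representations $\pi_1(\AffS)\to\mathrm{PU}(2,1)$. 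Without (i) restricting to liftable automorphisms in the normalization and (ii) adding such a global argument for the spherical family, the proof cannot reproduce Table \ref{table:2}.
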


\begin{proof}
For the cases $\AffR$ and $\AffS$, by $\Ad_g$ with $g=\Ml
1 & 0 \\
x & \lambda e^y \\
\Mr$, we may map $e_1$, $e_2$ to $\lambda e^{y} e_1$, $-x e_1 +e_2$, respectively. Thus, any left-invariant nondegenerate CR structure can be mapped to $\bC(e_2+ie_1+ae_3)$ with $\mathrm{Im}\,a\neq0$. For $\AffR$, by a constant multiple of $\bR$, we may further assume  $\mathrm{Im}\,a=1$.
In addition, there exists a Lie algebra homomorphism which sends $e_2$ to $e_2-\mathrm{Re}\,ae_3$; hence we may assume $a=i$. For $\AffS$, the claim follows from the embedding given in the next section.  

For $\Gtwo$, by $\Ad_g$ with $g=\Ml
1 & 0 & 0\\
y & \lambda e^z & 0\\
x & -\lambda ze^z & e^z
\Mr$, we may map $e_1$, $e_2$, $e_3$ to 
\[
\lambda e^{z} e_1, \quad \lambda e^z(-z e_1 + e_2), \quad (y-x)e_1 - ye_2 + e_3,
\]
 respectively. Thus, any Levi distribution is mapped to $\mathrm{Span}\{e_1, e_3\}$ or $\mathrm{Span}\{e_2, e_3\}$. Moreover, the latter structures can be mapped to $\bC(e_3+ie_2)$. The former structures are degenerate.\\
\indent For $\Gthree$, by $\Ad_g$ with $g=\Ml
1 & 0 & 0\\
y & e^z & 0\\
x & 0 & e^z
\Mr$, we may map $e_1$, $e_2$, $e_3$  to 
\[
\lambda e^{z} e_1,\quad 
 e^ze_2,\quad
  -xe_1 - ye_2 + e_3,
  \]
   respectively. Thus, any Levi distribution is mapped to $\mathrm{Span}\{e_1, e_3\}$ or $\mathrm{Span}\{ae_1+e_2, e_3\}$. These CR structure are degenerate.\\
\indent For $\Gfour$, by $\Ad_g$ with $g=\Ml
1 & 0 & 0\\
x & \lambda e^{a z}\cosh z & -\lambda e^{a z}\sinh z\\
y & -\lambda e^{a z}\sinh z & \lambda e^{a z}\cosh z
\Mr$, we may map $e_1$, $e_2$, $e_3$ to 
\[
\lambda e^{a z}(\cosh z e_1- \sinh z e_2),\  \lambda e^{a z}(-\sinh z e_1 + \cosh z e_2),\ 
 (y-a x)e_1 +(x-a y) e_2 + e_3,
\]
 respectively. Thus, any Levi distribution is mapped to $\mathrm{Span}\{e_1, e_3\}$, $\mathrm{Span}\{e_2, e_3\}$, $\mathrm{Span}\{e_1+e_2,e_3\}$, or $\mathrm{Span}\{e_1-e_2,e_3\}$. Moreover, the former two structures can be mapped to $\bC(e_3+ie_1)$, $\bC(e_3+ie_2)$, respectively. The latter two structures are degenerate. In addition, by the Lie algebra homomorphism which swaps $e_1$ and $e_2$, the structures $\bC(e_3+ie_1)$ and $\bC(e_3+ie_2)$ are equivalent.\\
\indent For $\Gfive$, by $\Ad_g$ with $g=\Ml
1 & 0 & 0\\
x & \lambda e^{a z}\cos z & -\lambda e^{a z}\sin z\\
y & \lambda e^{a z}\sin z & \lambda e^{a z}\cos z
\Mr$, we may map $e_1$, $e_2$, $e_3$ to 
\[
\lambda e^{a z}(\cos z e_1 + \sin z e_2), \  \lambda e^z{a z}(-\sin z e_1 + \cos z e_2), \ (y-a x)e_1 - (x+a y) e_2 + e_3,
\]
 respectively. Thus, any Levi distribution is mapped to $\mathrm{Span}\{e_2, e_3\}$. Moreover, these structures can be mapped to $\bC(e_3+ie_2)$.
\end{proof}

\begin{table}[ht]
\begin{center}
\begin{minipage}{\textwidth}
\caption{Normalized left-invariant CR structures on $\AffR$, $\AffS$, $\Gtwo$, $\Gfour$, $\Gfive$}\label{table:2}
\begin{tabular}{llll}
\toprule
Group & $T^{1,0}$ & basis & Lie bracket \\
\midrule
$\AffR$ & $\bC(e_2+i e_1 + i e_3)$ & \begin{tabular}{l}
$e_1 = \Ml
0 & 0 \\
1 & 0 \\
\Mr$,\\\addlinespace[2mm]
$e_2 = \Ml
0 & 0 \\
0 & 1 \\
\Mr$\end{tabular} &
\begin{tabular}{l}
$[e_1, e_2] = -e_1$, \\\addlinespace[2mm]
$[e_2,e_3] = 0$,\\\addlinespace[2mm]
$[e_3,e_1] = 0$
\end{tabular}\\
\midrule
$\AffS$ &\begin{tabular}{l}
$\bC(e_2+i e_1 + a i e_3)$,\\\addlinespace[2mm] $a>0$ \end{tabular} & \begin{tabular}{l}$e_1 = \Ml
0 & 0 \\
1 & 0 \\
\Mr$,\\\addlinespace[2mm] 
$e_2 = \Ml
0 & 0 \\
0 & 1 \\
\Mr$\end{tabular} &
\begin{tabular}{l}
$[e_1, e_2] = -e_1$,\\\addlinespace[2mm]
$[e_2,e_3] = 0$,\\\addlinespace[2mm]
$[e_3,e_1] = 0$
\end{tabular}\\
\midrule
$\Gtwo$ & $\bC(e_3 + i e_2)$ & 
\begin{tabular}{l}
$e_1 = \Ml
0 & 0 & 0\\
0 & 0 & 0 \\
1 & 0 & 0 \\
\Mr$,\\\addlinespace[2mm]
$e_2 = \Ml
0 & 0 & 0\\
1 & 0 & 0 \\
0 & 0 & 0 \\
\Mr$, \\\addlinespace[2mm]
$e_3 = \Ml
0 & 0 & 0 \\
0 & 1 & 0 \\
0 & -1 & 1 \\
\Mr$ 
\end{tabular} &
\begin{tabular}{l}
$[e_1, e_2] = 0$,\\\addlinespace[2mm]
$[e_2,e_3] = e_1 - e_2$, \\\addlinespace[2mm]
$[e_3,e_1] = e_1$
\end{tabular}\\
\midrule
$\Gfour$ & $\bC(e_3 + i e_1)$ & \begin{tabular}{l}
$e_1 = \Ml
0 & 0 & 0\\
1 & 0 & 0 \\
0 & 0 & 0 \\
\Mr$,\\\addlinespace[2mm]
$e_2 = \Ml
0 & 0 & 0\\
0 & 0 & 0 \\
1 & 0 & 0 \\
\Mr$, \\\addlinespace[2mm]
$e_3 = \Ml
0 & 0 & 0 \\
0 & a & -1 \\
0 & -1 & a \\
\Mr$ 
\end{tabular}&
\begin{tabular}{l}
$[e_1, e_2] = 0$,\\\addlinespace[2mm]
$[e_2,e_3] = e_1 - a e_2$,\\\addlinespace[2mm]
$[e_3,e_1] = a e_1 - e_2$
\end{tabular}\\
\midrule
$\Gfive$ & $\bC(e_3 + i e_2)$ & \begin{tabular}{l}
$e_1 = \Ml
0 & 0 & 0\\
1 & 0 & 0 \\
0 & 0 & 0 \\
\Mr$,\\\addlinespace[2mm]
$e_2 = \Ml
0 & 0 & 0\\
0 & 0 & 0 \\
1 & 0 & 0 \\
\Mr$, \\\addlinespace[2mm]
$e_3 = \Ml
0 & 0 & 0 \\
0 & a & -1 \\
0 & 1 & a \\
\Mr$ 
\end{tabular} &
\begin{tabular}{l}
$[e_1, e_2] = 0$,\\\addlinespace[2mm]
$[e_2,e_3] = e_1 - a e_2$,\\\addlinespace[2mm]
$[e_3,e_1] = a e_1 + e_2$
\end{tabular}\\
\bottomrule
\end{tabular}
\end{minipage}
\end{center}
\end{table}

\subsection{Embeddings}
We next consider the realization of the above CR structures as real hypersurfaces.
\begin{proposition}
All the CR structures in Table \ref{table:2} are embeddable in $\bC^2$. 
\end{proposition}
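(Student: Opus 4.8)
The plan is to write down, for each group $G$ in Table~\ref{table:2}, an explicit map $\Phi=(w_1,w_2)\colon G\to\bC^2$ that realizes the given CR structure as a real hypersurface. Since here $T^{1,0}=\bC Z$ for a single left-invariant complex vector field $Z$ (the generator in the second column), we have $T^{0,1}=\bC\overline Z$, and for $\Phi=(w_1,w_2)$ the relation $dz_j\big(d\Phi(\overline Z)\big)=\overline Z w_j$ shows that $\Phi$ is a CR map, i.e.\ $d\Phi(T^{0,1})\subseteq T^{0,1}\bC^2$, if and only if the components are CR functions:
\[
\overline Z w_1=\overline Z w_2=0.
\]
Granting this, if $\Phi$ is in addition an injective immersion onto a real hypersurface $S=\Phi(G)$, then it is the desired CR embedding: $d\Phi$ maps the complex line $T^{1,0}G$ into $T^{1,0}\bC^2\cap\bC TS=T^{1,0}S$, and as both are one-dimensional and $d\Phi$ is injective the two CR structures coincide. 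So the task reduces to exhibiting on each group two independent CR functions whose real and imaginary parts separate points and immerse $G$; the nondegeneracy of $S$ is automatic from the classification.

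First I would express each $G$ in the global coordinates supplied by its matrix presentation in Section~\ref{Classification-CRstructures}, compute the left-invariant frame $e_1,e_2,e_3$, and read off $\overline Z$ as a first-order complex operator; finding CR functions is then the integration of the single equation $\overline Z w=0$ by characteristics. For the model $\AffR$ one gets $e_1=e^{y}\partial_x$, $e_2=\partial_y$, $e_3=\partial_t$, hence $\overline Z=\partial_y-ie^{y}\partial_x-i\partial_t$, and the two first integrals
\[
w_1=t+iy,\qquad w_2=x+ie^{y}
\]
give the injective immersion $(x,y,t)\mapsto(t+iy,\ x+ie^{y})$ onto the hypersurface $\{\mathrm{Im}\,z_2=e^{\mathrm{Im}\,z_1}\}$. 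The remaining solvable groups $\Gtwo$, $\Gfour$ and $\Gfive$ are handled identically: the characteristic system is elementary (exponential and hyperbolic/trigonometric), so in each case one obtains two first integrals and checks directly that the resulting map is an injective immersion onto an explicit real hypersurface.

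The one case requiring extra care is $\AffS$, where the central $\bR$ is replaced by a circle and the structure depends on the modulus $a>0$; here the first integrals must be chosen single-valued on $S^{1}$. I expect this to be the main obstacle, since the naive characteristic $t+iy$ is multivalued and one must instead pass to periodic CR functions. Working with $\overline Z=\partial_y-ie^{y}\partial_x-ai\,\partial_t$ and $t\in\bR/2\pi\bZ$, the pair
\[
w_1=e^{\,it-ay},\qquad w_2=x+ie^{y}
\]
is single-valued and yields the injective immersion onto $\{|z_1|^{2}=(\mathrm{Im}\,z_2)^{-2a},\ \mathrm{Im}\,z_2>0\}$, with the parameter $a$ entering through the radial exponent. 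This simultaneously provides the embedding promised in the proof of the previous proposition and thereby completes the classification statement for $\AffS$. In every case the final verifications---immersivity, injectivity, and that the image is cut out by a single defining function---are routine once the CR functions are in hand.
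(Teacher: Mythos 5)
Your argument is correct where it is carried out, and it produces essentially the same explicit maps as the paper, but the verification mechanism is genuinely different. The paper defines each map as an orbit map of a holomorphic affine action of $G$ on $\bC^2$ (for instance $(1,\alpha,\beta)^\top=g\,(1,i,i)^\top$ for $\Gtwo$), checks the CR condition only at the identity, and lets equivariance propagate it over the whole group; this reduces each case to a single linear-algebra computation at one point and explains at the same time why the image is a homogeneous hypersurface. You instead integrate the tangential CR equation $\overline{Z}w=0$ by characteristics and take two global first integrals as the components of the embedding; this is more mechanical, requires no insight about an ambient group action, but demands a global computation and a case-by-case injectivity check. Your two worked cases are right: for $\AffR$ the pair $(t+iy,\,x+ie^{y})$ is the paper's $f$ with the two coordinates swapped, and for $\AffS$ your single-valued integral $e^{it-ay}$ plays exactly the role of the paper's $\beta=e^{ay-zi}$ (your map differs from the paper's by the harmless biholomorphism $(\alpha,\beta)\mapsto(1/\beta,\alpha)$ away from $\beta=0$). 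You also correctly isolate single-valuedness on the $S^{1}$ factor as the one conceptual subtlety, which is precisely the point the paper's choice of $\beta$ addresses, and your reduction of ``CR embedding'' to ``injective immersion onto a hypersurface with CR components'' is sound.

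The caveat is that for $\Gtwo$, $\Gfour$, $\Gfive$ you assert the computation rather than perform it. The assertion is true---for $\Gtwo$, with the frame $e_1=e^{z}\partial_x$, $e_2=e^{z}(\partial_y-z\partial_x)$, $e_3=\partial_z$, the characteristics of $\overline{Z}=\partial_z-ie^{z}(\partial_y-z\partial_x)$ give the first integrals $\alpha=y+ie^{z}$ and $\beta=x+ie^{z}(1-z)$, recovering the paper's map---but ``handled identically'' understates the remaining work: the injectivity checks are not uniform across the cases. For $\Gfive$ the imaginary parts of the two integrals wind trigonometrically in $z$, and injectivity rests on the observation that $(\mathrm{Im}\,\alpha)^{2}+(\mathrm{Im}\,\beta)^{2}=e^{2az}/(1+a^{2})$ is strictly monotone in $z$, which is exactly where the hypothesis $a>0$ enters. (Conversely, no single-valuedness issue can arise in these three cases, since $\Gtwo$, $\Gfour$, $\Gfive$ are simply connected; that concern is specific to $\AffS$.) Writing out those three integrations and injectivity checks would make your proposal a complete proof.
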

\begin{proof}
Let $f:\AffR\rightarrow\bC^2$ be defied by $\left(\Ml
1 & 0 \\
x & e^y \\
\Mr, z\right)\mapsto (e^yi+x,z+iy)$. As $df(e_2+ie_1+ie_3)\in T^{1,0}\bC^2$ at the unit element and $f$ is $\AffR$-invariant, we see that $f$ is a CR map. This map embed $\AffR$ into the hypersurface $\frac{\alpha-\bar{\alpha}}{2i}=e^{\frac{\beta-\bar{\beta}}{2i}}$.

The arguments are similar in each case, we only give the embedding map $f$ and its image.

For $\big(\AffS,\bC(e_2+ie_1+ai e_3)\big)$, we may take 
\[
f\left(\Ml
1 & 0 \\
x & e^y \\
\Mr, e^{zi}\right)= (e^yi+x,e^{ay-zi}).
\]
The image is the hypersurface $\frac{\alpha-\bar{\alpha}}{2i}={(\beta\bar{\beta})}^{\frac{1}{2a}}$.

For $\Gtwo$, we have
$ f(g)= (\alpha,\beta)$ with $(1,\alpha,\beta)^\top=g (1,i,i)^\top$. 
The image is the hypersurface $\frac{\alpha-\bar{\alpha}}{2i}=e^{1-\frac{\beta-\bar{\beta}}{\alpha-\bar{\alpha}}}$.

For $\Gfour$, we take
\[
f(g)= (\alpha,\beta) \quad \mathrm{with}\, (1,\alpha,\beta)^\top=g \left(1,-\frac{a}{1-a^2}i,-\frac{1}{1-a^2}i\right)^\top.
\]
Let $h:\bC^2\rightarrow\bC^2$ be defied by $(\alpha,\beta)\mapsto((a-1)(\alpha+\beta),(a+1)(\alpha-\beta))$. Then the image of $h\circ f$ is  the hypersurface $\left(\frac{\alpha-\bar{\alpha}}{2i}\right)^{\left(\frac{1+a}{a-1}\right)}=\frac{\beta-\bar{\beta}}{2i}$ with $\frac{\alpha-\bar{\alpha}}{2i}>0$.

For  $\Gfive$, we take
\begin{equation*}
f(g)=(\alpha,\beta)\quad \mathrm{with}\, (1,\alpha,\beta)^\top=g \left(1,\frac{1}{1+a^2}i,\frac{a}{1+a^2}i\right)^\top.
\end{equation*}
The image is the hypersurface $\frac{\alpha-\bar{\alpha}}{2i}=\frac{\cos c-a\sin c}{1+a^2}e^{ac}$ with 
\[c=\frac{1}{2a}\log\left((1+a^2)\left({\left(\frac{\alpha-\bar{\alpha}}{2i}\right)}^2+{\left(\frac{\beta-\bar{\beta}}{2i}\right)}^2\right)\right).\]
\end{proof}
\begin{remark}
These hypersurfaces appear in the table of Cartan's paper~\cite[p.70]{car}.
\end{remark}
\begin{remark}{(cf.~\cite{bor})}
The CR structures $\left(\SL,a=1,-3+2\sqrt{2}\right)$ are embedded in $\bC^2$. The CR structures $\left(\SL,-1<a<0, a\neq-3+2\sqrt{2}\right)$ are immersed as double covers of the adjoint orbits of $\SL$ in $\mathrm{P}\left(\mathfrak{sl}(2,\bC)\right)$. The CR structures $\left(\SL,0<a<1\right)$ are immersed as $4:1$ covers of the adjoint orbits of $\SL$ in $\mathrm{P}\left(\mathfrak{sl}(2,\bC)\right)$. The Rossi spheres are immersed as $2:1$ covers of the adjoint orbits of $\mathfrak{sl}(2,\bC)$. 
For $(\He, \bC(e_1-ie_2))$, the CR map: $\He\rightarrow\bC^2, \Ml
1 & x & z \\
0 & 1 & y \\
0 & 0 & 1
\Mr\mapsto(\alpha,\beta)=\left(x-iy,4z-2xy+(x^2+y^2)i\right)$ embeds $\He$ in $\mathrm{Im}\,\beta=\alpha\bar{\alpha}$.
\end{remark}

\subsection{Fefferman metrics}
We next compute  Fefferman metrics. Let $\{\omega^i\}$ be the dual coframe of $\{e_i\}$. For each CR structure, take the contact form $\theta$ as in Table \ref{table:3}. 
Then admissible coframes, the corresponding Tanaka--Webster connections/torsions/curvatures are given in Table \ref{table:4}. 
Using these, we obtain Table \ref{table:5} of Fefferman metrics $g_\theta$.

\begin{table}[ht]
\begin{center}
\begin{minipage}{\textwidth}
\caption{Contact forms and corresponding Reeb vector fields}
\label{table:3}
\begin{tabular}{llll}
\toprule
& CR structure & contact form $\theta$ & Reeb vector field $T$\\
\midrule
\addlinespace[2mm]
(A) & 
\begin{tabular}{l}
$(\AffS$,\\
$\bC(e_2+i e_1 + a i e_3))$,\\\addlinespace[2mm]
$a>0$\end{tabular} 
&
$a\omega^1-\omega^3$& 
$-e_3$
\\
\midrule
(B) & $\left(\Gtwo,\bC(e_3 + i e_2)\right)$ & 
$-\omega^1$&
$-(e_1+e_2)$
\\
\midrule
(C) & $\left(\Gfour,\bC(e_3 + i e_1)\right),a\ge0,a\neq1$ & 
$-\omega^2$&
$-(e_2+ae_1)$\\
\midrule
(D) & $\left(\Gfive,\bC(e_3 + i e_2)\right),a\ge0$ & 
$-\omega^1$&
$-(e_1+ae_2)$\\
\midrule
(E) & $\left(\He,\bC(e_1 - i e_2)\right)$ & 
$-\omega^3$& 
$-e_3$\\
\midrule
(F)&
\begin{tabular}{l}
$(\SL$,\\
$\bC\left(ae_1 + e_2 -i\frac{1+a}{2}e_3\right))$, \\\addlinespace[2mm]
$a\in(-1,0)\cup(0,1]$
\end{tabular}
&
$\mathrm{sign}(a)(\omega^1-a\omega^2)$&
$\mathrm{sign}(a)\left(\frac{1}{2}e_1-\frac{1}{2a}e_2\right)$
\\
\midrule
(G) & $\left(\SU,\bC\left(e_1 - \frac{i}{a} e_2\right)\right),a\ge1$ &
$-\omega^3$&
$-e_3$
\\
\bottomrule
\end{tabular}
\end{minipage}
\end{center}
\end{table}

\begin{table}[ht]
\begin{center}
\begin{minipage}{340pt}
\caption{Tanaka--Webster connections}\label{table:4}
\begin{tabular}{llll}
\toprule
& $\theta^1$ & $2h_{1\bar 1}$ & connection/torsion/curvature\\
\midrule
(A) & $\omega^1 + i\omega^2$ & 
$a$&
\begin{tabular}{l}
$\omega_1^{\,\,1}=\frac{i}{2}(\theta^1+\theta^{\bar{1}})$,\\\addlinespace[1mm]
$A_{\bar1}^{\,\,1}=0,S=-\frac{1}{a}$ 
\end{tabular}
\\
\midrule
(B) & 
$\omega^1-\omega^2-i\omega^3$&
$1$&
\begin{tabular}{l}
$\omega_1^{\,\,1}=-\frac{i}{2}\theta -i(\theta^1+\theta^{\bar{1}})$,\\\addlinespace[1mm]
$A_{\bar1}^{\,\,1}=-\frac{i}{2},S=-\frac{7}{2}$
\end{tabular}\\
\midrule
(C) & 
$\omega^1-a\omega^2+i\omega^3$& 
$1$&
\begin{tabular}{l}
$\omega_1^{\,\,1}=\frac{a^2-1}{2i}\theta+a i(\theta^1+\theta^{\bar{1}})$,\\\addlinespace[1mm]
$A_{\bar1}^{\,\,1}=\frac{a^2-1}{2i},S=-\frac{7a^2+1}{2}$
\end{tabular}\\
\midrule
(D) & 
$a\omega^1-\omega^2-i\omega^3$&
$1$
&
\begin{tabular}{l}
$\omega_1^{\,\,1}=\frac{a^2+1}{2i}\theta-a i(\theta^1+\theta^{\bar{1}})$,\\\addlinespace[1mm]
$A_{\bar1}^{\,\,1}=\frac{a^2+1}{2i},S=\frac{1-7a^2}{2}$ 
\end{tabular}\\
\midrule
(E) & 
$\omega^1+i\omega^2$& 
$1$ &
\begin{tabular}{l}
$\omega_1^{\,\,1}=0$,\\\addlinespace[1mm]
$A_{\bar1}^{\,\,1}=0,S=0$ 
\end{tabular}\\
\midrule
(F) & 
$\sqrt{\frac{4\lvert a \rvert}{a+1}}
\left(\omega^3-i\frac{1+a}{2}\omega^2-\frac{1+a}{4\lvert a \rvert}i\theta\right)$
& 
$2$&
\begin{tabular}{l}
$\omega_1^{\,\,1}=-Si\theta$,\\\addlinespace[1mm]
$S=-\frac{1+6a+a^2}{4\lvert a \rvert(1+a)}$,\\\addlinespace[1mm]
$A_{\bar1}^{\,\,1}=-i\frac{(1-a)^2}{4\lvert a \rvert(1+a)}$ 
\end{tabular}\\
\midrule
(G) & 
$\omega^1+ai\omega^2$
& 
$1/a$
&
\begin{tabular}{l}
$\omega_1^{\,\,1}=-Si\theta,S=\frac{1+a^2}{2a}$,\\\addlinespace[1mm]
$A_{\bar1}^{\,\,1}=\frac{1-a^2}{2ai}$ 
\end{tabular}\\
\bottomrule
\end{tabular}
\end{minipage}
\end{center}
\end{table}

\begin{table}[ht]
\begin{center}
\begin{minipage}{300pt}
\caption{Fefferman metrics}\label{table:5}
\begin{tabular}{ll}
\toprule
 & Fefferman metric\\
\midrule
(A)
& $g_\theta=a(\omega^1\otimes\omega^1+\omega^2\otimes\omega^2)+\frac{4}{3}\left(ds-\omega^1+\frac{1}{4a}\theta\right)\odot\theta$\\
\midrule
(B)  &
$g_\theta=\frac{1}{6}\omega^1\otimes\omega^1+\frac{2}{3}\omega^1\odot\omega^2+\omega^2\otimes\omega^2+\omega^3\otimes\omega^3-\frac{4}{3}\omega^1\odot ds$\\
\midrule
(C) & $g_\theta= \omega^1\otimes\omega^1 + (\frac{2a}{3}\omega^1-\frac{4}{3}ds)\odot\omega^2+\frac{a^2-3}{6}\omega^2\otimes\omega^2+\omega^3\otimes\omega^3$\\
\midrule
(D) 
& $g_\theta=\frac{a^2+3}{6}\omega^1\otimes\omega^1+\omega^2\otimes\omega^2+(\frac{2a}{3}\omega^2-\frac{4}{3}ds)\odot\omega^1+\omega^3\otimes\omega^3$\\
\midrule
(E) & $g_\theta=\omega^1\otimes\omega^1+\omega^2\otimes\omega^2-\frac{4}{3}\omega^3\odot ds$\\
\midrule
(F) & $g_\theta=2\theta^1\odot\theta^{\bar{1}}+\left(-\frac{1+6a+a^2}{4\lvert a \rvert(1+a)}\theta+\frac{4}{3}ds\right)\odot\theta$\\
\midrule
(G) & $g_\theta=\frac{1}{a}\omega^1\otimes\omega^1+a\omega^2\otimes\omega^2+4\omega^3\odot\left(\frac{1}{8}\left(a+\frac{1}{a}\right)\omega^3 - \frac{1}{3} ds \right)$\\
\bottomrule
\end{tabular}
\end{minipage}
\end{center}
\end{table}

\section{Chains of left-invariant CR structures on the 3-dimensional connected Lie groups except for $\SU$}\label{sec4}
In this section, we prove Theorem \ref{mainthm} except for the case of $\SU$. For any left-invariant CR structure on a Lie group $G$, the Fefferman space is $G\times S^1$ and $g_{\theta}$ is a left-invariant metric on $G\times S^1$. Thus we can apply the formulation in Section \ref{sec2.2} to the Fefferman metrics. When two Lie groups has the same Lie algebra, we can use the following

\begin{lemma}\label{covering}
Let $G$ be Lie groups with left-invariant nondegenerate CR structure. 
If $\tilde{G}$ is a covering group of $G$ with lifted CR structure, then the projections of the chains on $\tilde{G}$ are the chains of $G$.
\end{lemma}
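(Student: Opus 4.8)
The plan is to exploit the fact that a covering homomorphism $p\colon\tilde G\to G$ is a local CR diffeomorphism, together with the locality and naturality of Fefferman's construction: null geodesics, and hence chains, must correspond under the covering because every ingredient of the construction is built from $d$ and the CR data, all of which pull back under $p$.

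First I would fix a contact form $\theta$ on $G$ and put $\tilde\theta=p^*\theta$. Since by hypothesis $T^{1,0}\tilde G=p^*(T^{1,0}G)$ and $p$ is a local diffeomorphism, $p$ is a local CR diffeomorphism; hence any admissible coframe $\{\theta,\theta^1,\theta^{\bar1}\}$ over an open set $U\subset G$ pulls back to an admissible coframe $\{p^*\theta,p^*\theta^1,p^*\theta^{\bar1}\}$ over $p^{-1}(U)$. Because $p^*$ commutes with $d$ and the Tanaka--Webster invariants $h_{1\bar1}$, $\omega_1^{\,\,1}$, $A_{\bar1}{}^1$, $S$ are determined by the structure equations, each of these pulls back under $p$. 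Consequently $K^*(\tilde G)=p^*K^*(G)$, so $p$ induces a covering map $\hat p\colon C(\tilde G)\to C(G)$ of Fefferman spaces respecting the fiber coordinate $s$, and inspecting (\ref{sigma}) and (\ref{Feffermanmetric}) term by term yields $\hat p^*\sigma=\tilde\sigma$ and therefore $\hat p^*g_\theta=g_{\tilde\theta}$.

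I would then use that $\hat p$, being a local isometry for these metrics, sends geodesics to geodesics and preserves the null condition. Thus if $\Gamma$ is a null geodesic on $C(\tilde G)$ projecting to a chain $c$ on $\tilde G$, then $\hat p\circ\Gamma$ is a null geodesic on $C(G)$ projecting to $p\circ c$; moreover $\theta\big(\tfrac{d}{dt}(p\circ c)\big)=(p^*\theta)(\dot c)=\tilde\theta(\dot c)\neq0$, so $p\circ c$ is a chain on $G$. Conversely, given a chain on $G$ I would lift the underlying null geodesic through $\hat p$ by the path-lifting property of coverings; the lift is again a null geodesic (a local isometric image), and its projection is a chain on $\tilde G$ mapping to the given one. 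This shows that the projections of the chains on $\tilde G$ are exactly the chains of $G$.

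The only step needing care is the naturality of the full Fefferman package under the local CR diffeomorphism $p$, in particular that the fiber coordinate $s$---and hence $\sigma$---transforms correctly. Here I would use that $\sigma$ is independent of the chosen admissible coframe (as recalled after (\ref{sigma})): it suffices to verify $\hat p^*\sigma=\tilde\sigma$ on the pullback of a single local section $\zeta_0=[\theta\wedge\theta^1]$, where it is immediate from the term-by-term pullback above. Everything else---the preservation of geodesics, of nullity, and of the transversality $\theta(\dot\gamma)\neq0$---is local and thus automatic for a local isometry.
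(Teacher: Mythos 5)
Your proof is correct. The paper actually states Lemma \ref{covering} without any proof, treating it as an evident consequence of the naturality of Fefferman's construction, so there is no argument of the paper's to compare against; your write-up---pulling back $\theta$, admissible coframes, and the Tanaka--Webster data through the local CR diffeomorphism $p$, obtaining a covering $\hat p\colon C(\tilde G)\to C(G)$ with $\hat p^*g_\theta=g_{\tilde\theta}$, and then using that a local isometry preserves null geodesics (for the forward direction) together with path lifting through the covering $\hat p$ (for surjectivity)---is precisely the standard argument that fills this gap, and both directions implicit in ``the projections of the chains on $\tilde G$ are the chains of $G$'' are handled. One small remark: your argument nowhere uses left-invariance, so it establishes the statement for arbitrary coverings of nondegenerate CR manifolds, which is harmless and slightly more general than the Lie-group setting of the lemma.
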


Except for the case (F),  we take $\{\omega^1,\omega^2, \theta, ds\}$ as a coframe of $G\times S^1$, 
which can be also seen as a basis of the dual space of the Lie algebra of $G\times S^1$.
Then  the Hamiltonian and the Lie--Poisson equation are respectively given by \eqref{Hamiltonian} and \eqref{L-Peq}, in terms of the trivialization $M_{j}$.
Note that  the null geodesics lie on  the zero locus of the Hamiltonian $H=0$. Also, each solution of the Lie-Poisson equations lie on an orbit of the coadjoint action because of the momentum map $J$. 

We use the following criterions to obtain non-closed chains.
If we can find a non-closed solution of the Lie--Poisson equation, then the corresponding chain is not closed.  If we can find a stationary solution $(M_{j})$, then the OED for the null geodesic is
$
\dot p=M^{j}e_{j}
$ and 
the equation for the chain (which is also denoted by $p(t)$) is then obtained by omitting $e_{4}=\frac{\partial}{\partial s}$ component.
We will show that  these chains are not closed.

\subsection{The Lie--Poisson equation for (A)}
The Lie--Poisson equation and the Hamilton function are 
\begin{equation}
\label{L-PeqA}
\left\{
\begin{array}{llll}
\dot{M_1} = M_2 M_3 + \frac{1}{a} M_1M_2\\
\dot{M_2} = -\frac{1}{a} M_1^2 - M_1M_3 - \frac{1}{a} M_1M_4 - M_3M_4\\
\dot{M_3} = 0 \\
\dot{M_4} = 0 \\
\end{array}
\right.
\end{equation}
\[
2H =\frac{1}{a}\left({(M_1+M_4)}^2+M_2^2\right)+3M_3M_4-\frac{3}{4}\frac{M_4^2}{a}.
\]
Hence $M_{3}$ and $M_{4}$ are constant along the geodesics.
We set $M_3=-\frac{1}{2a}, M_4=1$ and consider the solutions lie on
$H=0$,  i.e., $(M_1+1)^2+M_2^2=\frac{9}{4}$. The point $(M_1,M_2)=\left(\frac{1}{2},0\right)$ is the only stationary point, where the right-hand sides of (\ref{L-PeqA}) are $0$, so those solutions are homoclinic orbits (see e.g.~\cite{guc} about homoclinic orbits), especially these are not closed. 
The chain corresponding to these orbits are no closed as $M_{1}$ is not periodic. Therefore, we have done for  $\AffS$. By Lemma \ref{covering}, the case for $\AffR$ is also done.

\subsection{The Lie--Poisson equation for (B)}
The equations are
\[
\left\{
\begin{array}{llll}
\dot{M_1} = M_1 M_3\\
\dot{M_2} = - M_1M_3 + M_2M_3\\
\dot{M_3} = 2M_1M_4 + M_1M_2 - M_2^2 - \frac{1}{2} M_2M_4 \\
\dot{M_4} = 0 \\
\end{array}
\right.
\]
\[
2H = -3M_1M_4 +M_2^2 + M_3^2 + M_2M_4 -\frac{1}{8}M_4^2.
\]
Let $M_1=0$ and $M_4=1$. Consider the solutions lie on $H=0$, that is, on the curve $\left( M_2+\frac{1}{2} \right)^2 +M_3^2 = \frac{3}{8}$ in $\bR^{2}$.
The points $(M_2,M_3)=\left(0,\pm\frac{1}{2\sqrt{2}}\right)$ are the stationary points, 
so we have  heteroclinic orbits on this circle. Thus we have non closes chains.

\subsection{The Lie--Poisson equation for (C)}
The equations are
\[
\left\{
\begin{array}{llll}
\dot{M_1} = a M_1 M_3 - M_2M_3\\
\dot{M_2} = a M_2 M_3 - M_1M_3\\
\dot{M_3} = -\frac{a^2+3}{2}M_1M_4 - a M_1^2 + 2a M_2M_4 + M_1M_2\\
\dot{M_4} = 0 \\
\end{array}
\right.
\]
\[
2H = M_1^2+a M_1M_4 - 3M_2M_4 + M_3^2 +\frac{9-a^2}{8}M_4^2.
\]
When $M_3=0, M_4=1$, a solution of the system of the equations $-\frac{a^2+3}{2}M_1 - a M_1^2 + 2a M_2 + M_1M_2=0, M_1^2+a M_1 - 3M_2 +\frac{9-a^2}{8}=0$ is a stationary point. Then it follows from (\ref{L-Peq}) that the equation of chains is $\dot{p}=\left(M_1+\frac{a}{2}\right)e_1-\frac{3}{2}e_2$. 
The solution 
\begin{equation*}
p(t)=\begin{pmatrix}
1 & 0 & 0\\
\left(M_1+\frac{a}{2}\right)t & 1 & 0 \\
-\frac{3}{2}t & 0 & 1
\end{pmatrix}
\end{equation*}
is not closed. 

\subsection{The Lie--Poisson equation for (D)}
For $\Gfive$, the equations are
\[
\left\{
\begin{array}{llll}
\dot{M_1} = a M_1 M_3 + M_2M_3\\
\dot{M_2} = a M_2 M_3 - M_1M_3\\
\dot{M_3} = \frac{3-a^2}{2}M_2M_4 - a M_2^2 + 2a M_1M_4 + M_1M_2\\
\dot{M_4} = 0 \\
\end{array}
\right.
\]
\[
H = \frac{1}{2}\left(M_2^2+a M_2M_4 - 3M_1M_4 + M_3^2 -\frac{9+a^2}{8}M_4^2\right).
\]
When $M_3=0, M_4=1$, the solutions of the system of equations $\frac{3-a^2}{2}M_2 - a M_2^2 + 2a M_1 + M_1M_2=0, M_2^2+a M_2 - 3M_1 -\frac{a^2+9}{8}=0$ give  stationary points. 
For such points, the equation of chains is $\dot{p}=-\frac{3}{2}e_1 + \left(M_2+\frac{a}{2}\right)e_2$,
which has a non-closed solution
\[
p(t)=\begin{pmatrix}
1 & 0 & 0\\
-\frac{3}{2}t & 1 & 0\\
\left(M_2+\frac{a}{2}\right)t & 0 & 1 \\
\end{pmatrix}.
\]
Since $\SEU$ and $\SEn$ cover  $\Gfive$, we also see that these cases also have non-closed chains.

\subsection{The Lie--Poisson equation for (E)}\label{chainofH}
While it is well-known, we compute the chains on $\He$ for the completeness.
The geodesic equation and the Hamilton function are
\[
\left\{
\begin{array}{lllll}
\dot{M_1} = -M_2 M_3\\
\dot{M_2} = M_1M_3\\
\dot{M_3} = 0 \\
\dot{M_4} = 0 \end{array}
\right.
\]
\[
2H =M_1^2 + M_2^2 - 3M_3M_4.
\]
For a solution $\left(p(t),\left\{M_i(t)\right\},M_4\right)$ with $M_4\neq0$, 
\[
\left(p(t/M_4),\left\{M_i(t/M_4)/M_4\right\},1\right)
\] is also a solution. 
Thus, noting that $\theta(\dot{p})=-M_{4}\ne0$, by a linear change of  the time parameter $t$, we may assume $M_{4}=1$.
Thus,
\[(M_1,M_2)=\Big(\sqrt{3M_3}\cos(M_3t), \sqrt{3M_3}\sin(M_3t)\Big)\] is a solution of the Lie--Poisson equation.

The equation for the chain is then given by 
\[
\dot{p}=M_1e_1+M_2e_2-\frac{3}{2}M_4e_3-\frac{3}{2}M_3.
\]
 If $M_3\neq0$, the chain $p(t)$ with $p(0)=\Ml
1 & 0 & 0\\
0 & 1 & 0\\
0 & 0 & 1\\
\Mr$ is 
\begin{equation*}
p(t)=\begin{pmatrix}
1 & \frac{\sqrt{3M_3}}{M_3}\sin(M_3t) & -\frac{3}{4M_3}\sin(2M_3t)\\
0 & 1 & \frac{\sqrt{3M_3}}{M_3}(1-\cos(M_3t))\\
0 & 0 & 1
\end{pmatrix}.
\end{equation*}
If $M_3=0$, the chain $p(t)$ can be written in the form 
\begin{align}
\label{ChainH}
p(t)=p(0)\begin{pmatrix}
1 & 0 & ct\\
0 & 1 & 0\\
0 & 0 & 1\\
\end{pmatrix}
\end{align}
with some constant $c$. 
If we take $\bZ(\He(\bZ))$-quotient, then $p(t)$ projects to a closed courve. Thus we see that 
 all the chains on
 $\HQ$ are closed.
 
 \subsection{The Lie--Poisson equation for (F)}
Let $\{\tilde{e_1},\tilde{e_2},\tilde{e_3}\}$ be the frame 
\[\left\{ae_1+e_2,\frac{1+a}{2}e_3, \left(\frac{1}{2}e_1-\frac{1}{2a}e_2\right)\mathrm{sign}(a)\right\}.\]
If we take $\left\{\tilde{e_1},\tilde{e_2},\tilde{e_3}, \frac{\partial}{\partial s}\right\}$ as a 
basis of the Lie algebra of $\SL\times S^1$, then the Lie--Poisson equation and the Hamilton function are
\[
\left\{
\begin{array}{lllll}
\dot{M_1} = M_2 M_3 + \frac{3}{1+a} \mathrm{sign}(a) M_2 M_4 \\
\dot{M_2} = - M_1 M_3 - \frac{3(1+a)}{4\lvert a \rvert} M_1 M_4\\
\dot{M_3} = (\frac{1}{4a^2} - \frac{1}{a(1+a)^2}) M_1 M_2 \\
\dot{M_4} = 0.
\end{array}
\right.
\]
\[
2H = \frac{{M_1}^2+{M_2}^2}{2\lvert a \rvert(1+a)}+3M_3M_4+\frac{9}{16}\frac{a^2+6a+1}{\lvert a \rvert(1+a)}M_4^2.
\]
Let $M_4=1$. Then the solution curves are on the paraboloid $H = 0$ with quadric surfaces 
\[
\frac{{M_1}^2}{4a^2} +\frac{{M_2}^2}{a{(1+a)}^2}-{M_3}^2=K
\]
 for some constant $K$. Those quadric surfaces are orbits of the coadjoint action of $\SL$. For $-1<a\le5-2\sqrt{6}, a \neq 0$, the  solution corresponding to $K=-\frac{9}{256}{\left(\frac{a^2+6a+1}{a(1+a)}\right)}^2$ is homoclinic. 
 For $5-2\sqrt{6}<a<1$, the solution corresponding to $K=-\frac{9}{8}\frac{(1-a)^2}{a(1+a)^2}$ is heteroclinic.
When $a=1$, the equation for the chain is
\[
\dot{p} = \frac{1}{4}M_1\tilde{e_1} + \frac{1}{4}M_2\tilde{e_2} + \frac{3}{2} M_4\tilde{e_3},
\]
which has non-closed solution.
The non-closed chains on $\SL$ project to non-closed chains on $\PSL$, and these are lifted to 
non-closed chains on the coverings $\An$ and $\AU$.

\section{Chains on $\SU$}\label{sec5}
\subsection{The Lie--Poisson equation}
For the case $\SU\times S^1$, the equations are
\begin{equation}
\label{L-PeqSU}
\left\{
\begin{array}{lllll}
\dot{M_1} = - \frac{1}{a} M_2 M_3 - \frac{3}{2} M_2 M_4 \\
\dot{M_2} = a M_1 M_3 + \frac{3}{2} M_1 M_4\\
\dot{M_3} = (\frac{1}{a} - a) M_1 M_2 \\
\dot{M_4} = 0 
\end{array}
\right.
\end{equation}
\[
2H = a{M_1}^2+\frac{1}{a}{M_2}^2-3M_3M_4-c(a)M_4^2,
\]
where
$c(a)=\frac{9}{8}\left(a+\frac{1}{a}\right)$.
As in Section \ref{chainofH}, we may assume $M_4=1$. The solution curves are the intersections of the paraboloid $H = 0$ with the spheres $K = {M_1}^2+{M_2}^2+{M_3}^2$, where $K$ is some constant. In~\cite{cas}, the projections of the curves onto $M_1M_2$-plane are classified as follows.

For $a>\sqrt{3}$, the origin becomes a saddle point. The curve corresponding to the case $K=\frac{9}{64}{\left(\frac{a+1}{a}\right)}^2$ is homoclinic. For $\frac{9}{8}\left(1-\frac{1}{a^2}\right)\le K<\frac{9}{64}{\left(\frac{a+1}{a}\right)}^2$, the curve is consisted of the two disjoint closed curves, and the curve becomes one and surround the origin for $K>\frac{9}{64}{\left(\frac{a+1}{a}\right)}^2$. 
By these observations, we see that there are non-closed chains for $a>\sqrt{3}$.

For $1\le a\le\sqrt{3}$, all the curves are closed and surround the origin. 
 For $1\le a\le\sqrt{3}$, the sphere intersects with the paraboloid when $K\ge{c(a)}^2/9$. The projections of the solution curves $M(t)$ onto $M_1M_2$-plane are symmetric to the $M_1$-axis and $M_2$-axis.
 To describe the curve let us set
 \[
\begin{array}{rl}
\alpha &= \frac{1}{2}\left(3-3a^2+\sqrt{9a^4-4c(a)a^3+4Ka^2}\right),\\
\beta &=\frac{1}{2}\left(3-3a^2-\sqrt{9a^4-4c(a)a^3+4Ka^2}\right),\\
\gamma &= \sqrt{Ka^2-c(a)a+{9}/{4}}.
\end{array}
\]
It is worth noting that $\beta\le\alpha\le\gamma$.
Then, in the first quadrant, $M_1$ is represented by $M_2$ as
\begin{equation*}
M_1 =  \frac{1}{a}\sqrt{-{M_2}^2+c(a)a-{9}/{2}+3s(M_{2})}\quad
\text{for } 0 \le M_2 \le M_2^{\mathrm{max}}, 
\end{equation*}
where 
\begin{align*}
M_2^{\mathrm{max}}&=\sqrt{c(a)a-{9}/{2}+3\alpha},
\\
s(M_{2})&=\sqrt{(1-a^2){M_2}^2+\gamma^2}.
\end{align*}
Then the period of $M(t)$ is given by
\begin{equation*}
T(K,a) =4 \int_0^{M_2^{\mathrm{max}}} \frac{dM_2}{\left(aM_3+{3}/{2}\right)M_1} .
\end{equation*}
Using $H=0$, we may write
$
aM_3+\frac{3}{2}=s(M_{2})$,
and the denominator in the integral can be written as
\[
\frac{s(M_{2})}{a}\sqrt{-{M_2}^2+c(a)a-{9}/{2}+3s(M_{2})}.
\]
Hence the change of the variable gives
\begin{equation}\label{period}
T(K,a) = \int_\alpha^\gamma\frac{4a \,ds}{\sqrt{(s-\alpha)(s-\beta)}\sqrt{\gamma^2-s^2}}.
\end{equation}

\begin{lemma}
For $1\le a\le\sqrt{3}$, one has
\begin{equation*}
\lim_{K\rightarrow {c(a)}^2/9+0} T(K,a) = \frac{16a\pi}{3\sqrt{3-a^2}\sqrt{3a^2-1}}.
\end{equation*}
When $a=\sqrt{3}$, the right-hand side is read as $\infty$.
\end{lemma}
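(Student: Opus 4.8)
The integral \eqref{period} is a complete elliptic integral taken between the two adjacent branch points $\alpha$ and $\gamma$ of the quartic $(s-\alpha)(s-\beta)(\gamma-s)(\gamma+s)$ appearing under the radical (note $\gamma^2-s^2=(\gamma-s)(\gamma+s)$, and on $(\alpha,\gamma)$ all four factors are positive). The plan is to exploit the fact that, at the critical value $K=c(a)^2/9$, the two branch points $\alpha$ and $\gamma$ collide, so the period degenerates to an elementary integral whose value can be read off directly. This also explains, geometrically, why the limit is the natural candidate: it is the period of the linearization at the saddle, recovered as the surrounding closed orbits shrink onto it.

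First I would evaluate $\alpha,\beta,\gamma$ at $K=c(a)^2/9$. Using $c(a)a=\tfrac{9}{8}(a^2+1)$, one checks that at this value both radicands are perfect squares,
\[
9a^4-4c(a)a^3+4Ka^2=a^2\Bigl(3a-\tfrac{2}{3}c(a)\Bigr)^2,\qquad Ka^2-c(a)a+\tfrac94=\Bigl(\tfrac13 c(a)a-\tfrac32\Bigr)^2 .
\]
Since $1\le a\le\sqrt3$ gives $3a^2-1>0$ and $3-a^2\ge0$, taking the positive roots yields
\[
\alpha_0=\gamma_0=\frac{3(3-a^2)}{8},\qquad \beta_0=\frac{3(5-7a^2)}{8}.
\]
In particular $\alpha$ and $\gamma$ meet at $K=c(a)^2/9$, while $\beta_0<\alpha_0$ and $\alpha_0+\gamma_0=2\gamma_0>0$ remain away from the collision point for $a<\sqrt3$. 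The ordering $\beta\le\alpha\le\gamma$ recorded before \eqref{period} guarantees $\alpha<\gamma$ for $K>c(a)^2/9$, so the interval of integration is nondegenerate.

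Next I would remove the endpoint singularities by the substitution $s=\alpha+(\gamma-\alpha)\sin^2\phi$, $\phi\in[0,\tfrac{\pi}{2}]$, under which $\sqrt{(s-\alpha)(\gamma-s)}=(\gamma-\alpha)\sin\phi\cos\phi$ and $ds=2(\gamma-\alpha)\sin\phi\cos\phi\,d\phi$, so that $ds/\sqrt{(s-\alpha)(\gamma-s)}=2\,d\phi$ and
\[
T(K,a)=\int_0^{\pi/2}\frac{8a\,d\phi}{\sqrt{(s-\beta)(s+\gamma)}},\qquad s=\alpha+(\gamma-\alpha)\sin^2\phi .
\]
This integral is now proper. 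As $K\to c(a)^2/9+0$ we have $\gamma-\alpha\to0$, hence $s\to\gamma_0$ uniformly in $\phi$, and the integrand converges uniformly to the constant $8a\big/\sqrt{(\gamma_0-\beta_0)(2\gamma_0)}$, the denominator staying bounded away from $0$ for $a<\sqrt3$. Passing the limit inside the integral and using $\gamma_0-\beta_0=\tfrac{3(3a^2-1)}{4}$ and $2\gamma_0=\tfrac{3(3-a^2)}{4}$ gives
\[
\lim_{K\to c(a)^2/9+0}T(K,a)=\frac{\pi}{2}\cdot\frac{8a}{\sqrt{(\gamma_0-\beta_0)(2\gamma_0)}}=\frac{16a\pi}{3\sqrt{3-a^2}\sqrt{3a^2-1}} .
\]

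The only point requiring care is the interchange of limit and integral, which is clean after the substitution because the integrand is continuous and uniformly bounded in $K$ on the compact interval $[0,\pi/2]$; this is the step I would regard as the main obstacle in making the argument rigorous, and it is resolved precisely by the $\sin^2$-substitution that converts the two merging square-root singularities into a smooth integrand. The sole degeneration occurs at $a=\sqrt3$, where $\gamma_0=0$ forces $s+\gamma\to0$ so the integrand blows up and $T\to\infty$, in agreement with the stated reading of the right-hand side.
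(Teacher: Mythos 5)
Your proposal is correct and follows essentially the same route as the paper: both arguments rest on the limiting values $\alpha,\gamma\to\tfrac{3}{8}(3-a^2)$, $\beta\to\tfrac{3}{8}(5-7a^2)$ and on the elementary identity $\int_\alpha^\gamma \frac{ds}{\sqrt{(s-\alpha)(\gamma-s)}}=\pi$, which is exactly what your substitution $s=\alpha+(\gamma-\alpha)\sin^2\phi$ encodes. The only difference is bookkeeping: the paper squeezes $T(K,a)$ between $\frac{4a\pi}{\sqrt{(\gamma-\beta)\,2\gamma}}$ and $\frac{4a\pi}{\sqrt{(\alpha-\beta)(\alpha+\gamma)}}$ by bounding the regular factor $\sqrt{(s-\beta)(s+\gamma)}$ by its extreme values, whereas you pass the limit under the (now proper) integral by uniform convergence; the two justifications are the same estimate in different packaging, and both likewise give the divergence at $a=\sqrt{3}$.
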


\begin{proof}
As $K\rightarrow{c(a)}^2/9+0$, we have
\[
\alpha\to \frac{3}{2}-\frac{c(a)}{3},\quad \gamma\to \alpha\to \frac{3}{2}-\frac{c(a)}{3},
\quad
 \beta \to -3a^2+\frac{c(a)a}{3}+\frac{3}{2}.
\]
On the other hand, \eqref{period} and 
\[\int_\alpha^\gamma \frac{ds}{\sqrt{s-\alpha}\sqrt{\gamma-s}}=\pi
\]
give the estimate
\begin{equation*}
\frac{4a\pi}{\sqrt{\gamma-\beta}\sqrt{2\gamma}}\le T(K,a)\le\frac{4a\pi}{\sqrt{\alpha-\beta}\sqrt{\alpha+\gamma}}.
\end{equation*}
Both sides converge to the value claimed in the lemma.
\end{proof}

Similarly, we obtain the following
\begin{lemma}\label{TKinfty}
For $1\le a\le\sqrt{3}$, one has $\lim_{K\rightarrow\infty} T(K,a) = 0.$
\end{lemma}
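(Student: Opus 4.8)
The plan is to mirror the proof of the preceding lemma: I would derive an explicit upper bound for $T(K,a)$ by splitting the integrand in \eqref{period} and invoking the same elementary identity $\int_\alpha^\gamma \frac{ds}{\sqrt{s-\alpha}\sqrt{\gamma-s}}=\pi$, and then check that this bound vanishes as $K\to\infty$.

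First I would factor the denominator in \eqref{period} as $\sqrt{(s-\alpha)(s-\beta)}\,\sqrt{\gamma^2-s^2}=\sqrt{(s-\alpha)(\gamma-s)}\,\sqrt{(s-\beta)(\gamma+s)}$, which is legitimate on $[\alpha,\gamma]$ since $\beta\le\alpha\le s\le\gamma$ there. On this interval both $s-\beta$ and $\gamma+s$ are increasing in $s$, so the second factor is bounded below by its value at $s=\alpha$, namely $\sqrt{(\alpha-\beta)(\gamma+\alpha)}$. Pulling this constant out of the integral and applying the identity above yields
\[
T(K,a)\le \frac{4a\pi}{\sqrt{(\alpha-\beta)(\gamma+\alpha)}}.
\]

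It then remains to examine the behaviour of the two factors under the square root as $K\to\infty$. Here $\alpha-\beta=\sqrt{9a^4-4c(a)a^3+4Ka^2}\to\infty$ and $\gamma=\sqrt{Ka^2-c(a)a+9/4}\to\infty$, while $\alpha=\tfrac12(3-3a^2)+\tfrac12\sqrt{9a^4-4c(a)a^3+4Ka^2}\to+\infty$, so $\gamma+\alpha\to\infty$ as well. Consequently the right-hand side tends to $0$; since $T(K,a)\ge0$, this gives $\lim_{K\to\infty}T(K,a)=0$. Quantitatively both factors grow like $2a\sqrt{K}$, so the bound is of order $2\pi/\sqrt{K}$.

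There is essentially no genuine obstacle here, this being the easy companion to the previous lemma; the only points requiring care are verifying that the endpoint lower bound on $\sqrt{(s-\beta)(\gamma+s)}$ is valid across the whole interval---which follows from the ordering $\beta\le\alpha\le s\le\gamma$ together with $\alpha>0$ for $K$ large---and confirming that \emph{both} $\alpha-\beta$ and $\gamma+\alpha$ diverge rather than one of them staying bounded.
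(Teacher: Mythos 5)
Your proof is correct and takes essentially the same route as the paper: the paper proves this lemma ``similarly'' to the preceding one, i.e.\ via the upper bound $T(K,a)\le 4a\pi/\bigl(\sqrt{\alpha-\beta}\sqrt{\alpha+\gamma}\bigr)$ obtained from the identity $\int_\alpha^\gamma \frac{ds}{\sqrt{s-\alpha}\sqrt{\gamma-s}}=\pi$, which is exactly the bound you derive. Your observation that $\alpha-\beta$ and $\alpha+\gamma$ both grow like $2a\sqrt{K}$, so that $T(K,a)=O(K^{-1/2})$, completes the argument just as the paper intends.
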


\subsubsection{Computation of the Berry phase}

For each closed solution $M(t)$ of the Lie--Poisson equation with period $T=T(K,a)$, we  study the equation for the chain on $\SU$:
\[
\dot{p} = aM_1e_1 + \frac{1}{a}M_2e_2 - \frac{3}{2} e_3.
\]
Recall that the orbit $C$ of the curve $M(t)$ in $\mathfrak{su}(2)^*\cong\bR^{3}$ is the intersection of $H=0$ with the sphere of radius $\sqrt{K}$;
let us denote by $D(K,a)$ is the part of the sphere below the curve $M$.

To simplify the notation, let us write $G=\SU$ and $J\colon T^{*}G\to\mathfrak{g}^*$ be   the momentum map and $\pi\colon T^{*}G\to\mathfrak{g}^*$ be given by $M$.
 (Our notation is different from the one in~\cite{cas}, where $G=\SU\times S^1$). 
It is then shown that
$J^{-1}(\mu)\cap \pi^{-1}(C)$ is a 2-torus $T^{2}$, and the solution $\gamma(t)=(p(t),M(t))$ lies there.
Since $\pi\colon T^{2}\to C$ is a $S^{1}$-bundle, we may define the angle $\Delta\theta$ between
$\gamma(0)$ and $\gamma(T)$, which is called the 
the \textit{Berry phase}.  A formula for  $\Delta\theta$ has been given in 
~\cite[(20)-(22)]{cas}:
\begin{align}
\label{Berryphase}
\Delta\theta(K,a)=\frac{1}{\sqrt{K}}\int_0^{T(K,a)} f(t,K,a) dt +\text{[Area of }D(K,a)],\end{align}
where 
\[
f(t,K,a) = \frac{3}{2}M_3(t) + c(a) = \frac{1}{2}\left(a{M_1(t)}^2+\frac{1}{a}{M_2(t)}^2 + c(a)\right).
\]

\begin{proposition}
For $1\le a\le\sqrt{3}$, one has
\begin{equation*}
\lim_{K\rightarrow \infty} \Delta\theta(K,a) = 4\pi.
\end{equation*}
\end{proposition}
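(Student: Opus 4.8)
The plan is to split the Berry phase \eqref{Berryphase} into its two summands, the \emph{dynamic phase} $P(K,a):=\frac{1}{\sqrt K}\int_0^{T(K,a)} f(t,K,a)\,dt$ and the \emph{area term} $[\text{Area of }D(K,a)]$, and to show that the first tends to $0$ while the second tends to $4\pi$; thus the entire limit is of geometric origin. The guiding picture is that, as $K\to\infty$, the orbit $C=\{H=0\}\cap\{M_1^2+M_2^2+M_3^2=K\}$ collapses onto the north pole $(0,0,\sqrt K)$ of the sphere, so that $D(K,a)$, the part lying below $C$, exhausts the whole sphere of solid angle $4\pi$.

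First I would dispose of the dynamic phase. Using $f=\frac32 M_3+c(a)$ together with $|M_3|\le\sqrt K$ on the sphere of radius $\sqrt K$, one has the uniform bound $|f|\le \frac32\sqrt K+c(a)$ along $C$, hence
\[
|P(K,a)|\le\Big(\tfrac32+\tfrac{c(a)}{\sqrt K}\Big)\,T(K,a).
\]
Since $T(K,a)\to0$ by Lemma \ref{TKinfty}, the dynamic phase tends to $0$.

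The substance of the proof is therefore the area term, for which I must show that $C$ shrinks to the north pole. From $H=0$ with $M_4=1$ one has $3M_3=aM_1^2+\frac1a M_2^2-c(a)$, and since $a\ge1$ gives $\frac1a(M_1^2+M_2^2)\le aM_1^2+\frac1a M_2^2$, the sphere relation forces $M_1^2+M_2^2\le a\big(3\sqrt K+c(a)\big)=O(\sqrt K)$ on $C$. A short argument rules out $M_3<0$ for large $K$: otherwise $aM_1^2+\frac1a M_2^2<c(a)$ keeps $M_1^2+M_2^2$ bounded and forces $M_3\approx-\sqrt K$, contradicting $3M_3=aM_1^2+\frac1a M_2^2-c(a)\ge-c(a)$. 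Consequently $M_3=\sqrt{K-(M_1^2+M_2^2)}=\sqrt K\,(1+o(1))$ uniformly, so the polar angle of every point of $C$ satisfies $\cos\phi=M_3/\sqrt K\to1$ and $C$ lies in a spherical cap about the north pole whose angular radius tends to $0$. As $D(K,a)$ is the part of the sphere below $C$, i.e. the complement of the region enclosed by $C$ that contains the pole, its solid angle equals $4\pi$ minus that of this shrinking region; bounding the latter by a round cap of angular radius $\phi_{\max}=\arccos\big(\min_C M_3/\sqrt K\big)$ gives a solid angle at most $2\pi(1-\cos\phi_{\max})\to0$. Hence $[\text{Area of }D(K,a)]\to4\pi$, and combining with the vanishing of the dynamic phase yields the claim.

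The main obstacle is the area estimate, not the dynamic phase: one must control the geometry of $C$ \emph{uniformly}, establishing both the $O(\sqrt K)$ bound on $M_1^2+M_2^2$ and the correct northern sign of $M_3$, and one must be careful that the area term is read as the solid angle subtended by $D(K,a)$ — normalized so that the full sphere has measure $4\pi$ — which is precisely the normalization under which the collapse of $C$ produces the value $4\pi$.
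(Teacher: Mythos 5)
Your proof is correct and takes essentially the same route as the paper: the dynamic phase is disposed of by the identical bound $0\le f\le\tfrac32\sqrt K+c(a)$ combined with Lemma \ref{TKinfty}, and the area term is handled by showing the orbit $C$ collapses to the north pole, so that the solid-angle area of $D(K,a)$ tends to that of the full sphere, $4\pi$. The only (cosmetic) difference is in how the collapse is verified: the paper reads it off from the explicit parametrization of $C$ (the minimum of $M_3$ is attained at $(0,M_2^{\mathrm{max}})$ and $\gamma/(a\sqrt K)\to1$), whereas you derive $M_3=\sqrt K\,(1+o(1))$ uniformly on $C$ directly from the constraints $H=0$ and $M_1^2+M_2^2+M_3^2=K$, including the sign argument ruling out $M_3<0$.
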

\begin{proof}
The first term of \eqref{Berryphase} satisfies
\begin{equation*}
0 \le \frac{1}{\sqrt{K}}\int_0^{T} \frac{3}{2}M_3(t) + c(a) dt \le \frac{T}{\sqrt{K}}\left(\frac{3}{2}\sqrt{K} + c(a)\right).
\end{equation*}
By using Lemma \ref{TKinfty}, we can see that the right-hand side converges to 0 when $K\rightarrow\infty$. 

It remains to show that $D(K,a)$ converges to $4\pi$ as $K\to\infty$.
Since $M_3=\frac{1}{a}\left(-3/2+\sqrt{(1-a^2){M_2}^2+\gamma^2}\right)$, we see that $M_3$ takes the minimum value $M_{3}^{\mathrm{min}}$ at the point $(M_1,M_2)=(0,M_{2}^{\mathrm{max}})$ 
on  $C$. Therefore, 
\[\lim_{K\rightarrow\infty}M_{3}^{\mathrm{min}}/\sqrt{K} = \lim_{K\rightarrow\infty}\gamma/(a\sqrt{K}) = 1.
\]
\end{proof}

\begin{proposition}
For $1\le a\le\sqrt{3}$, one has
\begin{equation*}
\lim_{K\rightarrow {c(a)}^2/9+0} \Delta\theta(K,a) = \frac{8a\pi}{\sqrt{3-a^2}\sqrt{3a^2-1}}.
\end{equation*}
For $a=\sqrt{3}$, the right-hand side is read as $\infty$.
\end{proposition}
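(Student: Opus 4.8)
The plan is to treat the two terms of the Berry phase formula \eqref{Berryphase} separately and to show that, as $K\to c(a)^2/9+0$, the orbit $C$ collapses to a single point, namely the south pole of the sphere. Writing the constraint $H=0$ as $M_3=\frac13\big(aM_1^2+\tfrac1aM_2^2\big)-\frac{c(a)}{3}$, this paraboloid has its vertex at $V=(0,0,-c(a)/3)$, lying at distance $c(a)/3$ from the origin. First I would verify that $V$ is the nearest point of the relevant branch to the origin: the squared-distance function has a critical point at $V$ whose Hessian is positive definite for $1\le a<\sqrt3$, with diagonal entries proportional to $3-a^2$ and $3a^2-1$. Hence the local branch of the intersection near $V$ exists only for $K\ge c(a)^2/9$ and, as $K\downarrow c(a)^2/9$, the loop $C$ shrinks to $V$. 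Equivalently, from the formulas of the previous subsection one finds $\alpha,\gamma\to\frac{3(3-a^2)}{8}$ and therefore $M_2^{\mathrm{max}}=\sqrt{c(a)a-9/2+3\alpha}\to 0$.

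Consequently $M_1(t)$ and $M_2(t)$ tend to $0$ and $M_3(t)\to -c(a)/3$, both uniformly in $t$. For the first term of \eqref{Berryphase} I would then use $f(t,K,a)=\frac12\big(aM_1^2+\tfrac1aM_2^2+c(a)\big)\to\frac{c(a)}{2}$ uniformly, together with $\sqrt K\to c(a)/3$ and the preceding lemma $T(K,a)\to T_0:=\frac{16a\pi}{3\sqrt{3-a^2}\sqrt{3a^2-1}}$, to obtain
\[
\frac{1}{\sqrt K}\int_0^{T(K,a)}f(t,K,a)\,dt\;\longrightarrow\;\frac{3}{c(a)}\cdot\frac{c(a)}{2}\cdot T_0=\frac32 T_0=\frac{8a\pi}{\sqrt{3-a^2}\sqrt{3a^2-1}}.
\]
For the second term, $D(K,a)$ is the spherical cap cut out below the loop $C$; since $C$ collapses to the south pole, this cap shrinks to a point and $[\text{Area of }D(K,a)]\to 0$. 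Adding the two limits gives the asserted value.

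For $a=\sqrt3$ the same collapse occurs, but the preceding lemma gives $T(K,a)\to\infty$. Since $f\ge c(a)/2>0$ pointwise and $\sqrt K$ stays bounded (it tends to $c(\sqrt3)/3$), the first term is bounded below by $\frac{c(a)}{2\sqrt K}\,T(K,a)\to\infty$, while the area term is trapped in $[0,4\pi]$; hence $\Delta\theta\to\infty$, matching the stated convention.

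The step I expect to be the main obstacle is controlling the area term rigorously: I must confirm that $D(K,a)$ is really the shrinking cap about the south pole and not its complement (which would tend to $4\pi$, as in the $K\to\infty$ proposition). I would settle the orientation using the sign $M_3\to -c(a)/3<0$ along $C$, and bound the cap's area from above by that of the geodesic disk whose radius is comparable to $M_2^{\mathrm{max}}\to0$, so that the area indeed vanishes in the limit.
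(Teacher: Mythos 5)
Your proof is correct and takes essentially the same route as the paper's: the orbit collapses to a point as $K\to c(a)^2/9$, so $M_1,M_2\to 0$ uniformly, the first term of the Berry phase formula converges to $\tfrac{c(a)}{2}\lim T(K,a)/\sqrt{K}=\tfrac{3}{2}\lim T(K,a)$ by the preceding lemma, and the area term tends to $0$. The additional details you supply --- the Hessian computation confirming the nondegenerate collapse at the paraboloid's vertex, the orientation check that $D(K,a)$ is the shrinking cap rather than its complement, and the explicit lower-bound argument for $a=\sqrt{3}$ --- are correct elaborations of steps the paper leaves implicit.
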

\begin{proof}
When $K\to{c(a)}^2/9+0=:\lambda$, $M_1$ and $M_2$ converge to $0$ uniformly in $t$. 
Thus the first term of \eqref{Berryphase} converges to
\[
\frac{1}{2}c(a)\lim_{K\rightarrow\lambda} \frac{T(K,a)}{\sqrt{K}}=
\frac{3}{2}\lim_{K\rightarrow \lambda} T(K,a)= \frac{8a\pi}{\sqrt{3-a^2}\sqrt{3a^2-1}}.
\]
 The second term of \eqref{Berryphase} converges to $0$.
\end{proof}
For each fixed $1<a\le\sqrt{3}$, the above two limit values of $\Delta\theta(K,a)$ are different. Thus $\Delta\theta(K,a)$ is not constant. In particular,  not all the chains are closed.

In~\cite[Proposition 7.1]{cas}, it was shown that not all the chains are closed for any $a>\sqrt{3}$. Thus for any left-invariant aspherical CR structure on $\SU$, both closed chains and quasiperiodic chains exist. Any quasiperiodic chain is dense on a two-tori in $\SU$ and $\SO=\SU/\{\pm I\}$, so the projections onto $\SO$ of the quasiperiodic chains are not closed.
Thus we have completed the proof of Theorem \ref{mainthm}.

\section*{Acknowledgments}
This paper is based on my Master's thesis.
I would like to thank Professor Kengo Hirachi for his guidance and constructive suggestions.

\end{document}